\journal{Arxiv}
\newtheorem{theorem}{Theorem}
\newtheorem{lemma}{Lemma}
\newtheorem{corollary}{Corollary}
\newtheorem{example}{Example}
\newtheorem{problem}{Problem}
\begin{document}

\begin{frontmatter}

\title{Optimal functions with spectral constraints in hypercubes}
%\tnotetext[label0]{Some Grant.}

\author[01]{Alexandr Valyuzhenich}
\ead{graphkiper@mail.ru}

%\author[02]{Alexandr Valyuzhenich\corref{cor1}}
%\cortext[cor1]{Corresponding author}
%\ead{graphkiper@mail.ru}

\address[01]{Moscow Institute of Physics and Technology, Dolgoprudny, Russia}

%\address[02]{Sobolev Institute of Mathematics, Ak. Koptyug av. 4, Novosibirsk 630090, Russia}

\begin{abstract}
The $n$-dimensional hypercube has $n+1$ distinct eigenvalues $n-2i$, $0\leq i\leq n$, with corresponding eigenspaces $U_i(n)$.
In 2021 it was proved by the author that if a function with non-empty support belongs to the direct sum $U_i(n)\oplus U_{i+1}(n)\oplus\ldots\oplus U_j(n)$,
where $0\leq i\leq j\leq n$, then it has at least $\max(2^i,2^{n-j})$ non-zeros.
In this work we give a characterization of functions achieving this bound.
\end{abstract}

\begin{keyword}
%% keywords here, in the form: keyword \sep keyword
hypercube\sep eigenfunction\sep eigenfunctions of graphs\sep minimum support\sep trade\sep $[t]$-trade
\vspace{\baselineskip}
\MSC[2010] 05C50\sep 05B30
%% MSC codes here, in the form: \MSC code \sep code
%% or \MSC[2008] code \sep code (2000 is the default)
\end{keyword}

\end{frontmatter}

\section{Introduction}\label{Sec:Intro}
There are the following extremal problems for eigenfunctions of graphs.

\begin{problem}\label{Problem:1}
Let $G$ be a graph and let $\lambda$ be an eigenvalue of $G$. Find the minimum cardinality of the support of a $\lambda$-eigenfunction of $G$.
\end{problem}

\begin{problem}\label{Problem:2}
Let $G$ be a graph and let $\lambda$ be an eigenvalue of $G$. Characterize $\lambda$-eigenfunctions of $G$ with the minimum cardinality of the
support.
\end{problem}

During the last years, Problems \ref{Problem:1} and \ref{Problem:2} have been actively studied for various families of distance-regular graphs
\cite{B18,GKSV18,GSY23,K16,KMP16,S18,S19,V17,VV19,V21,VK15,VMV18} and Cayley graphs on the symmetric group \cite{KKSV20}.
In particular, Problem \ref{Problem:1} is completely solved for all eigenvalues of the Hamming graph \cite{K16,VV19,V21} and asymptotically solved for all eigenvalues of the Johnson graph \cite{VMV18}. In more details, Problems \ref{Problem:1} and \ref{Problem:2} are discussed in a recent survey \cite{SV21}.

The {\em Hamming graph} $H(n,q)$ is defined as follows. The vertex set of $H(n,q)$ is $\mathbb{Z}_{q}^n$, and two vertices are adjacent if they differ in exactly one coordinate.
The adjacency matrix of $H(n,q)$ has $n+1$ distinct eigenvalues $n(q-1)-q\cdot i$, where $0\leq i\leq n$.
Let $U_{[i,j]}(n,q)$, where $0\leq i\leq j\leq n$, denote the direct sum of eigenspaces of $H(n,q)$ corresponding to consecutive eigenvalues from $n(q-1)-q\cdot i$ to $n(q-1)-q\cdot j$. The support of a real-valued function $f$ is denoted by $S(f)$.

Let $0\leq i\leq j\leq n$. Denote $$m_{i,j}(n,q)=\min_{f\in U_{[i,j](n,q)},f\not\equiv 0}|S(f)|.$$
A function $f\in U_{[i,j]}(n,q)$ is called {\em optimal} in the space $U_{[i,j]}(n,q)$ if $|S(f)|=m_{i,j}(n,q)$.
In this work we consider the following natural generalizations of Problems \ref{Problem:1} and \ref{Problem:2} for the Hamming graph.
\begin{problem}\label{Problem:3}
Let $n\geq 1$, $q\ge 2$ and $0\leq i\leq j\leq n$. Find $m_{i,j}(n,q)$.
\end{problem}

\begin{problem}\label{Problem:4}
Let $n\geq 1$, $q\ge 2$ and $0\leq i\leq j\leq n$. Characterize functions that are optimal in the space $U_{[i,j]}(n,q)$.
\end{problem}

Problem \ref{Problem:3} is completely solved for all $n\geq 1$ and $q\ge 2$ in \cite{VV19,V21}.
Moreover, Problem \ref{Problem:4} is solved for $q\ge 3$, $i+j\le n$ and $q\ge 5$, $i=j$, $i>\frac{n}{2}$ in \cite{VV19}.
In this work we solve Problem \ref{Problem:4} for $q=2$ and arbitrary $n$.
The main ideas of the proof are the following.
For $i+j\geq n$, we prove that functions that are optimal in the space $U_{[i,j]}(n,2)$ correspond to some $[i-1]$-trades in $H(n,2)$ (for more information on $[t]$-trades see \cite{GKKK20,K18}).
Then we apply a characterization of $[t]$-trades of size $2^{t+1}$ obtained by D. Krotov in \cite{K18}.
Finally, using the bipartiteness of $H(n,2)$, we reduce the case $i+j\leq n$ to the case $i+j\geq n$.

The paper is organized as follows. In Section \ref{Sec:Definitions}, we introduce basic definitions.
In Section \ref{Sec:Prelim}, we give preliminary results.
In Section \ref{Sec:Constructions}, we present constructions of functions that are optimal in the space $U_{[i,j]}(n,2)$.
In Section \ref{Sec:Main}, we characterize functions that are optimal in the space $U_{[i,j]}(n,2)$.
In Section \ref{Sec:Spectr}, we discuss the properties of the spectrum of optimal functions.

\section{Basic definitions}\label{Sec:Definitions}
The eigenvalues of a graph are the eigenvalues of its adjacency matrix.
Let $G$ be a graph with vertex set $V$ and let $\lambda$ be an eigenvalue of $G$. The set of neighbors of a vertex $x$ is denoted by $N(x)$.
A function $f:V\longrightarrow{\mathbb{R}}$ is called a {\em $\lambda$-eigenfunction} of $G$ if $f\not\equiv 0$ and the equality
\begin{equation}\label{Eq:Eigenfunction}
\lambda\cdot f(x)=\sum_{y\in{N(x)}}f(y)
\end{equation}
holds for any vertex $x\in V$. The set of functions $f:V\longrightarrow{\mathbb{R}}$ satisfying (\ref{Eq:Eigenfunction}) for any vertex $x\in V$ is called a {\em $\lambda$-eigenspace} of $G$. The {\em support} of a function $f:V\longrightarrow{\mathbb{R}}$ is the set $S(f)=\{x\in V~|~f(x)\neq 0\}$.
Denote $|f|=|S(f)|$.

Given a graph $G$, denote by $U(G)$ the set of all real-valued functions defined on the vertex set of $G$.
Note that the set $U(G)$ forms a vector space over $\mathbb{R}$.

The {\em $n$-dimensional hypercube} $H(n)$ is defined as follows.
The vertex set of $H(n)$ is $\mathbb{Z}_{2}^n$, and two vertices are adjacent if they differ in exactly one coordinate.
This graph has $n+1$ distinct eigenvalues $\lambda_i(n)=n-2i$, where $0\leq i\leq n$.
Denote by $U_{i}(n)$ the $\lambda_{i}(n)$-eigenspace of $H(n)$. The direct sum of subspaces
$$U_i(n)\oplus U_{i+1}(n)\oplus\ldots\oplus U_j(n)$$ for $0\leq i\leq j\leq n$ is denoted by $U_{[i,j]}(n)$. Denote $U(n)=U(H(n))$.

Let $G_1=(V_1,E_1)$ and $G_2=(V_2,E_2)$ be two graphs.
The {\em Cartesian product} $G_1\square G_2$ of graphs $G_1$ and $G_2$ is defined as follows.
The vertex set of $G_1\square G_2$ is $V_1\times V_2$;
and any two vertices $(x_1,y_1)$ and $(x_2,y_2)$ are adjacent if and only if either
$x_1=x_2$ and $y_1$ is adjacent to $y_2$ in $G_2$, or
$y_1=y_2$ and $x_1$ is adjacent to $x_2$ in $G_1$.

Suppose $G_1=(V_1,E_1)$ and $G_2=(V_2,E_2)$ are two graphs. Let $f_1:V_1\longrightarrow{\mathbb{R}}$ and $f_2:V_2\longrightarrow{\mathbb{R}}$.
Denote $G=G_1\square G_2$.
We define the {\em tensor product} $f_1\otimes f_2$  on the vertices of $G$ by the following rule:
$$(f_1\otimes f_2)(x,y)=f_1(x)f_2(y)$$ for $(x,y)\in V(G)=V_1\times V_2$.

Let $f$ be a real-valued function defined on the vertices of $H(n)$ and let $k\in \{0,1\}$, $r\in\{1,\ldots,n\}$.
We define a function $f_{k}^{r}$ on the vertices of $H(n-1)$ as follows:
for any vertex $y=(y_1,\ldots,y_{r-1},y_{r+1},\ldots,y_n)$ of $H(n-1)$
$$f_{k}^{r}(y)=f(y_1,\ldots,y_{r-1},k,y_{r+1},\ldots,y_n).$$

For a vector $u\in \mathbb{Z}_{2}^n$, where $u=(u_1,\ldots,u_n)$, we define a function $\chi_u$ on the vertices of $H(n)$ as follows:
$$\chi_u(x_1,\ldots,x_n)=(-1)^{u_1x_1+\ldots+u_nx_n}.$$
The functions $\chi_u$, where $u\in \mathbb{Z}_{2}^n$, are also known as the characters of the group $\mathbb{Z}_{2}^n$.

The {\em weight} of a vector $x\in \mathbb{Z}_{2}^n$, denoted by $\mathrm{wt}(x)$, is the number of its non-zero coordinates.

Let $A$ and $B$ be two finite subsets of $\mathbb{Z}$. Denote $$A+B=\{c\in \mathbb{Z}~|~c=a+b,a\in A,b\in B\}.$$

Let $\{i_1,\ldots,i_m\}$ be an $m$-element subset of $\{1,2,\ldots,n\}$ and let $a_i\in \{0,1\}$ for all $1\leq i\leq m$.
Denote $$\Gamma_{i_1,\ldots,i_m}^{a_1,\ldots,a_m}=\{(x_1,\ldots,x_n)\in \mathbb{Z}_{2}^n~|~x_{i_1}=a_1,\ldots,x_{i_m}=a_m\}.$$
For $m\in \{0,1,\ldots,n\}$, a set $\Gamma\subseteq \mathbb{Z}_{2}^n$ is called an {\em $(n-m)$-face} if there exist an $m$-element subset $\{i_1,\ldots,i_m\}$ of
$\{1,2,\ldots,n\}$ and numbers
$a_1,\ldots,a_m\in \{0,1\}$ such that $\Gamma=\Gamma_{i_1,\ldots,i_m}^{a_1,\ldots,a_m}$.

%The set of all real-valued functions defined on the vertex set of $H(n)$ is denoted by $\mathcal{V}_n$.

Recall that the set $U(n)$ forms a vector space over $\mathbb{R}$. We define an inner product on this vector space as follows:
$$\langle f,g \rangle=\frac{1}{2^n}\sum_{x\in \mathbb{Z}_{2}^n}f(x)g(x)$$
Two functions $f\in U(n)$ and $g\in U(n)$ are called {\em orthogonal} if $\langle f,g \rangle=0$.

A pair $\{T_0,T_1\}$ of two disjoint nonempty subsets of $\mathbb{Z}_2^{n}$ is called a {\em $[t]$-trade} in $H(n)$ if every $(n-t)$-face contains the same number of elements from $T_0$ and from $T_1$.
%For more information on $[t]$-trades we refer the reader to \cite{GKKK20,K18}.
For a subset $A$ of $\mathbb{Z}_2^n$, let $\mathbf{1}_A$ denote the characteristic function of $A$ in $\mathbb{Z}_2^n$.

For every non-negative integer $r$ and every positive integer $n\geq r$, the {\em Reed–Muller code} $\mathcal{RM}(r,n)$ of order $r$ is
the set of all $n$-variable Boolean functions of algebraic degree at most $r$.

Let $G$ be a bipartite graph with parts $V_1$ and $V_2$. Suppose that $f$ is a real-valued function defined on the vertices of $G$.
We define a function $f'$ on the vertices of $G$ by the following rule:
$$
f'(x)=\begin{cases}
f(x),&\text{if $x\in V_1$;}\\
-f(x),&\text{if $x\in V_2$.}
\end{cases}
$$

For a function $f\in U(n)$, we define a function $\widetilde{f}$ on the vertices of $H(n)$ as follows:
$$\widetilde{f}(x_1,\ldots,x_n)=(-1)^{x_1+\ldots+x_n}\cdot f(x_1,\ldots,x_n).$$

Any function $f\in U(n)$ can be uniquely represented in the following form:
$$f=\sum_{i=0}^{n}f_i,$$
where $f_i\in U_i(n)$ for any $0\leq i\leq n$.
The {\em spectrum} of a function $f\in U(n)$ is the set $$\mathrm{Spec}(f)=\{0\leq i\leq n~|~f_i\not\equiv 0\}.$$

Two functions $f\in U(n)$ and $g\in U(n)$ are called {\em equivalent} if there exist an automorphism $\pi$ of $H(n)$ and a real non-zero constant $c$ such that the equality $g(x)=c\cdot f(\pi(x))$ holds for any vertex $x$ of $H(n)$. We denote this equivalence by $f\sim g$.

%For a subset $A$ of $\mathbb{Z}_2^n$, let $\mathbf{1}_A$ denote the characteristic function of $A$ in $\mathbb{Z}_2^n$.

\section{Preliminaries}\label{Sec:Prelim}
In this section, we give preliminary results.
The following lemma is a special case of Corollary 1 proved in \cite{VV19}.
\begin{lemma}\label{Lemma:Product}
Let $f_1\in U_i(m)$ and $f_2\in U_j(n)$. Then $f_1\otimes f_2\in U_{i+j}(m+n)$.
\end{lemma}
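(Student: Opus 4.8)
The plan is to verify directly that $f_1\otimes f_2$ satisfies the eigenfunction equation (\ref{Eq:Eigenfunction}) for $H(m+n)$ with eigenvalue $\lambda_{i+j}(m+n)$. First I would record the two structural facts that drive the computation. On the one hand, $H(m+n)$ is isomorphic to $H(m)\square H(n)$ via the natural bijection sending a vertex $(x_1,\ldots,x_{m+n})$ of $H(m+n)$ to the pair $\bigl((x_1,\ldots,x_m),(x_{m+1},\ldots,x_{m+n})\bigr)\in\mathbb{Z}_2^m\times\mathbb{Z}_2^n$; under this identification the neighbourhood of $(x,y)$ splits as a disjoint union $N(x,y)=\{(x',y):x'\in N(x)\}\cup\{(x,y'):y'\in N(y)\}$, since changing one coordinate of $(x,y)$ means changing one coordinate of $x$ (leaving $y$ fixed) or one coordinate of $y$ (leaving $x$ fixed). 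On the other hand, the eigenvalues add correctly: $\lambda_i(m)+\lambda_j(n)=(m-2i)+(n-2j)=(m+n)-2(i+j)=\lambda_{i+j}(m+n)$.

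Next, for a fixed vertex $(x,y)$ of $H(m+n)$, using the definition of $f_1\otimes f_2$ and the splitting of $N(x,y)$, I would compute
\begin{align*}
\sum_{(u,v)\in N(x,y)}(f_1\otimes f_2)(u,v)
&=\sum_{x'\in N(x)}f_1(x')f_2(y)+\sum_{y'\in N(y)}f_1(x)f_2(y')\\
&=f_2(y)\sum_{x'\in N(x)}f_1(x')+f_1(x)\sum_{y'\in N(y)}f_2(y').
\end{align*}
If both $f_1$ and $f_2$ are not identically zero, they are genuine eigenfunctions, so the two inner sums equal $\lambda_i(m)f_1(x)$ and $\lambda_j(n)f_2(y)$ respectively, and the right-hand side becomes $\bigl(\lambda_i(m)+\lambda_j(n)\bigr)f_1(x)f_2(y)=\lambda_{i+j}(m+n)\,(f_1\otimes f_2)(x,y)$, which is precisely (\ref{Eq:Eigenfunction}) for $H(m+n)$. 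If one of $f_1,f_2$ is identically zero, then $f_1\otimes f_2\equiv 0$, which lies in every eigenspace. In either case $f_1\otimes f_2\in U_{i+j}(m+n)$.

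I do not expect any real obstacle here: once the Cartesian product structure of $H(m+n)$ is made explicit, the claim reduces to the one-line factorisation above, and indeed the statement is quoted in the paper as a special case of a more general result in \cite{VV19}. The only points deserving a moment of care are the bookkeeping of the vertex identification $H(m+n)\cong H(m)\square H(n)$ (including that the neighbourhood splits into exactly those two disjoint families) and the remark that $U_{i+j}(m+n)$ is defined as an eigenspace rather than as a set of nonzero eigenfunctions, so the degenerate case $f_1\otimes f_2\equiv 0$ needs no separate argument.
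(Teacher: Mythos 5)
Your proof is correct. The paper does not actually prove this lemma --- it simply cites it as a special case of Corollary~1 of \cite{VV19} --- so your self-contained verification via the identification $H(m+n)\cong H(m)\square H(n)$, the splitting of $N(x,y)$ into the two disjoint families, and the additivity of the eigenvalues is exactly the standard argument one would supply, and every step checks out. One small simplification: the case distinction on whether $f_1$ or $f_2$ is identically zero is unnecessary. Since $U_i(m)$ is by definition the full eigenspace (it contains the zero function), the identity $\sum_{x'\in N(x)}f_1(x')=\lambda_i(m)f_1(x)$ holds for every $f_1\in U_i(m)$ without any nonvanishing hypothesis --- the same observation you already make for $U_{i+j}(m+n)$ at the end applies equally to the factors, so the computation goes through uniformly in all cases.
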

The following result is a special case of Lemma 4 proved in \cite{VV19}.
\begin{lemma}\label{Lemma:Reduction}
Let $f\in{U_{[i,j]}(n)}$ and $r\in\{1,2,\ldots,n\}$. Then the following statements are true:
\begin{enumerate}
\item $f_{0}^{r}-f_{1}^{r}\in{U_{[i-1,j-1]}(n-1)}$.
\item $f_{0}^{r}+f_{1}^{r}\in{U_{[i,j]}(n-1)}$.
\item $f_{k}^{r}\in{U_{[i-1,j]}(n-1)}$ for $k\in \{0,1\}$.
\end{enumerate}
\end{lemma}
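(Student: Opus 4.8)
The plan is to pass to the Fourier (character) expansion of $f$. The key preliminary fact is that each character $\chi_u$ is an eigenfunction of $H(n)$: flipping the $s$-th coordinate of a vertex $x$ multiplies $\chi_u(x)$ by $(-1)^{u_s}$, so $\sum_{y\in N(x)}\chi_u(y)=\big(\sum_{s=1}^{n}(-1)^{u_s}\big)\chi_u(x)=(n-2\,\mathrm{wt}(u))\chi_u(x)$, i.e. $\chi_u\in U_{\mathrm{wt}(u)}(n)$. Since the $2^n$ characters are pairwise orthogonal and $\sum_{i=0}^{n}\binom ni=2^n=\dim U(n)$, this forces $U_i(n)=\mathrm{span}\{\chi_u:\mathrm{wt}(u)=i\}$, hence $U_{[i,j]}(n)=\mathrm{span}\{\chi_u:i\le\mathrm{wt}(u)\le j\}$. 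So I would begin by writing $f=\sum_u c_u\chi_u$, the sum running over all $u\in\mathbb{Z}_2^{n}$ with $i\le\mathrm{wt}(u)\le j$.

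Next I would compute the restrictions. For $u=(u_1,\dots,u_n)$ let $\widehat u\in\mathbb{Z}_2^{n-1}$ denote the vector obtained by deleting the $r$-th coordinate; then for a vertex $y$ of $H(n-1)$ one has $\chi_u(y_1,\dots,y_{r-1},k,y_{r+1},\dots,y_n)=(-1)^{k u_r}\chi_{\widehat u}(y)$, so
\[ f_k^{r}=\sum_u c_u(-1)^{k u_r}\chi_{\widehat u}. \]
Hence $f_0^{r}-f_1^{r}=2\sum_{u:\,u_r=1}c_u\chi_{\widehat u}$ and $f_0^{r}+f_1^{r}=2\sum_{u:\,u_r=0}c_u\chi_{\widehat u}$.

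Now I just read off weights. If $u_r=1$ and $i\le\mathrm{wt}(u)\le j$, then $\mathrm{wt}(\widehat u)=\mathrm{wt}(u)-1\in[i-1,j-1]$, which gives statement~1; if $u_r=0$ then $\mathrm{wt}(\widehat u)=\mathrm{wt}(u)\in[i,j]$, which gives statement~2 (when an index falls outside $[0,n-1]$ the corresponding sum is empty, with the obvious truncation convention for $U_{[a,b]}$). Statement~3 then follows formally from $f_k^{r}=\tfrac12\big((f_0^{r}+f_1^{r})+(-1)^{k}(f_0^{r}-f_1^{r})\big)$, since $U_{[i,j]}(n-1)$ and $U_{[i-1,j-1]}(n-1)$ both lie inside $U_{[i-1,j]}(n-1)$.

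There is no serious obstacle here; the only points needing care are the identification $U_i(n)=\mathrm{span}\{\chi_u:\mathrm{wt}(u)=i\}$ (routine, and anticipated by the paper's remark that the $\chi_u$ are the characters of $\mathbb{Z}_2^n$) and the bookkeeping of which weights survive in each sum. If one prefers to avoid characters, the same conclusion follows from Lemma~\ref{Lemma:Product}: splitting off the $r$-th coordinate gives $U_a(n)=\big(U_0(1)\otimes U_a(n-1)\big)\oplus\big(U_1(1)\otimes U_{a-1}(n-1)\big)$ (Lemma~\ref{Lemma:Product} together with $\binom{n-1}{a}+\binom{n-1}{a-1}=\binom na$), and restricting each summand at $x_r=k$ — noting that $U_0(1)$ is spanned by the all-ones function and $U_1(1)$ by $(1,-1)$ — reproduces the displayed formulas for $f_0^{r}\pm f_1^{r}$.
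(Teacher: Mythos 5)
Your proof is correct. Note that the paper itself gives no argument for this lemma---it simply cites it as a special case of Lemma~4 of \cite{VV19}---so there is no internal proof to compare against; what you have written is a valid self-contained derivation. Your route through the character expansion is the natural one and is entirely consistent with the paper's own toolkit: the identification $U_i(n)=\mathrm{span}\{\chi_u : \mathrm{wt}(u)=i\}$ is exactly part~2 of Lemma~\ref{Lemma:Characters}, the restriction formula $\chi_u(y_1,\ldots,k,\ldots,y_n)=(-1)^{ku_r}\chi_{\widehat u}(y)$ is immediate, and the weight bookkeeping for $f_0^r\pm f_1^r$ is handled correctly, including the degenerate cases (empty sums when $i=0$ or $j=n$). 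Statement~3 via $f_k^r=\tfrac12\bigl((f_0^r+f_1^r)+(-1)^k(f_0^r-f_1^r)\bigr)$ and the containments $U_{[i,j]}(n-1),\,U_{[i-1,j-1]}(n-1)\subseteq U_{[i-1,j]}(n-1)$ is also fine. The only caveat worth recording is that the cited source proves the statement for general $H(n,q)$, whereas your argument is specific to $q=2$; for the purposes of this paper, which only uses the $q=2$ case, that is no loss.
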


\begin{lemma}\label{Lemma:NiceRepr}
Let $f\in{U_{[i,j]}(n)}$ and $r\in\{1,2,\ldots,n\}$. Then there are functions $g$ and $h$ such that $f_{0}^{r}=g+h$, $f_{1}^{r}=g-h$ and $g\in U_{[i,j]}(n-1)$,
$h\in U_{[i-1,j-1]}(n-1)$.
\end{lemma}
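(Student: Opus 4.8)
The plan is to combine parts 1 and 2 of Lemma~\ref{Lemma:Reduction} directly. Given $f\in U_{[i,j]}(n)$ and $r\in\{1,\ldots,n\}$, set
$$g=\frac{f_{0}^{r}+f_{1}^{r}}{2},\qquad h=\frac{f_{0}^{r}-f_{1}^{r}}{2}.$$
Then immediately $g+h=f_{0}^{r}$ and $g-h=f_{1}^{r}$, so the required identities hold by construction. It remains to check the spectral membership of $g$ and $h$.

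For $h$: by part~1 of Lemma~\ref{Lemma:Reduction} we have $f_{0}^{r}-f_{1}^{r}\in U_{[i-1,j-1]}(n-1)$, and since $U_{[i-1,j-1]}(n-1)$ is a vector space it is closed under scalar multiplication, so $h=\tfrac12(f_{0}^{r}-f_{1}^{r})\in U_{[i-1,j-1]}(n-1)$. For $g$: by part~2 of Lemma~\ref{Lemma:Reduction} we have $f_{0}^{r}+f_{1}^{r}\in U_{[i,j]}(n-1)$, and again by closure under scalar multiplication $g=\tfrac12(f_{0}^{r}+f_{1}^{r})\in U_{[i,j]}(n-1)$. This completes the argument.

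There is essentially no obstacle here: the lemma is a cosmetic repackaging of Lemma~\ref{Lemma:Reduction}, trading the two ``sum and difference'' statements for an explicit decomposition $f_0^r=g+h$, $f_1^r=g-h$ that will be more convenient to invoke in later inductive arguments. The only point worth a moment's care is that one must have both $i-1$ and $j-1$ nonnegative for the notation $U_{[i-1,j-1]}(n-1)$ to make literal sense when $i=0$; in that boundary case $h$ is understood to be the zero function (consistent with part~1 of Lemma~\ref{Lemma:Reduction}, where the $U_{[-1,j-1]}$ component is trivial), and the statement still holds with $g=f_0^r=f_1^r$.
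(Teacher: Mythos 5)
Your proof is correct and coincides with the paper's: the same choice $g=\tfrac12(f_0^r+f_1^r)$, $h=\tfrac12(f_0^r-f_1^r)$ followed by parts 1 and 2 of Lemma~\ref{Lemma:Reduction}. The remark about the boundary case $i=0$ is a reasonable extra precaution but not needed beyond what the paper does.
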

\begin{proof}
Denote $g=\frac{1}{2}(f_{0}^{r}+f_{1}^{r})$ and $h=\frac{1}{2}(f_{0}^{r}-f_{1}^{r})$. Then we have $f_{0}^{r}=g+h$ and $f_{1}^{r}=g-h$.
In addition, by Lemma \ref{Lemma:Reduction} we obtain that $g\in U_{[i,j]}(n-1)$ and $h\in U_{[i-1,j-1]}(n-1)$.
\end{proof}
We will use Lemma \ref{Lemma:NiceRepr} in the proof of Lemma \ref{Lemma:3Values}.
The following two properties of the characters of $\mathbb{Z}_{2}^n$ are well-known.
\begin{lemma}\label{Lemma:Characters}
The following statements hold:
\begin{enumerate}

  \item The set $\{\chi_u~|~u\in \mathbb{Z}_{2}^n\}$ forms an orthonormal basis of the vector space $U(n)$.

  \item For every $0\leq i\leq n$, the set $\{\chi_u~|~u\in \mathbb{Z}_{2}^n, \mathrm{wt}(u)=i\}$ forms a basis of the vector space $U_i(n)$.

\end{enumerate}
\end{lemma}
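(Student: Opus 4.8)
The plan is to verify both parts directly from the explicit formula $\chi_u(x) = (-1)^{u_1 x_1 + \cdots + u_n x_n}$, using only that $\dim U(n) = |\mathbb{Z}_2^n| = 2^n$ and that the adjacency matrix of $H(n)$ is real symmetric. I would begin with orthonormality: for $u,v \in \mathbb{Z}_2^n$,
$$\langle \chi_u, \chi_v \rangle = \frac{1}{2^n}\sum_{x \in \mathbb{Z}_2^n} (-1)^{(u_1+v_1)x_1 + \cdots + (u_n+v_n)x_n}.$$
If $u = v$ every summand equals $1$ and the value is $1$; if $u \ne v$, I pick a coordinate $k$ with $u_k \ne v_k$ and pair each $x$ with the vector obtained from $x$ by flipping its $k$-th coordinate, which groups the sum into canceling pairs, so the value is $0$. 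Thus the $2^n$ functions $\chi_u$ are orthonormal, hence linearly independent, and therefore an orthonormal basis of $U(n)$.

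For the second statement I would first check that $\chi_u$ with $\mathrm{wt}(u) = i$ lies in $U_i(n)$. The set $N(x)$ consists of the $n$ vectors obtained from $x$ by flipping a single coordinate; writing $e_k$ for the vector with a unique $1$ in position $k$, we have $\chi_u(x + e_k) = (-1)^{u_k}\chi_u(x)$, so
$$\sum_{y \in N(x)} \chi_u(y) = \Big(\sum_{k=1}^n (-1)^{u_k}\Big)\chi_u(x) = \big((n-i)-i\big)\chi_u(x) = (n-2i)\,\chi_u(x),$$
which is exactly (\ref{Eq:Eigenfunction}) with $\lambda = \lambda_i(n)$; hence $\chi_u \in U_i(n)$. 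By the first part the $\binom{n}{i}$ characters $\{\chi_u : \mathrm{wt}(u) = i\}$ are linearly independent, so $\dim U_i(n) \ge \binom{n}{i}$. On the other hand the eigenspaces $U_0(n),\ldots,U_n(n)$ are in direct sum inside $U(n)$ (distinct eigenvalues of a real symmetric matrix), so $\sum_{i=0}^n \dim U_i(n) \le 2^n = \sum_{i=0}^n \binom{n}{i}$. Comparing these two bounds forces $\dim U_i(n) = \binom{n}{i}$ for every $i$; hence the linearly independent set $\{\chi_u : \mathrm{wt}(u) = i\}$ spans $U_i(n)$ and is a basis of it.

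This is essentially standard Fourier analysis on $\mathbb{Z}_2^n$, so I do not anticipate a genuine obstacle. The only step deserving a word of care is the concluding dimension count: it uses that $U(n)$ is the orthogonal direct sum of the eigenspaces $U_i(n)$, equivalently that $H(n)$ has no eigenvalues besides $\lambda_0(n),\ldots,\lambda_n(n)$. Once the first part is in hand this is automatic, since one has then exhibited $2^n$ eigenfunctions forming a basis of $U(n)$; alternatively it follows from the spectral theorem applied to the real symmetric adjacency matrix of $H(n)$.
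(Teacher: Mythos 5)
Your proof is correct and complete. The paper offers no proof of this lemma at all (it is stated as a well-known property of the characters of $\mathbb{Z}_2^n$), so there is nothing to compare against; your argument is the standard one one would supply: the pairing trick for orthogonality, the direct verification that $\chi_u$ with $\mathrm{wt}(u)=i$ satisfies the eigenvalue equation with $\lambda=n-2i$, and the dimension count $\sum_i \binom{n}{i}=2^n$ to conclude that the weight-$i$ characters span all of $U_i(n)$. The one step you flagged as needing care — that the eigenspaces sum directly inside $U(n)$ — is handled correctly by either of the two routes you mention, so there is no gap.
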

We will use Lemma \ref{Lemma:Characters} for the proofs of Lemmas \ref{Lemma:BasisCharPr}, \ref{Lemma:SumSpectr}, \ref{Lemma:CorrImm} and \ref{Lemma:SpecFunctions}.
The following result about the Cartesian product of graphs is well-known.

\begin{lemma}\label{Lemma:OrtBasisPr}
Let $G_1$ and $G_2$ be graphs with $m$ and $n$ vertices.
If $f_1,\ldots,f_m$ and $g_1,\ldots,g_n$ are orthogonal bases for the vector spaces $U(G_1)$ and $U(G_2)$,
then the set $$\{f_i\otimes g_j~|~i\in \{1,\ldots,m\},j\in \{1,\ldots,n\}\}$$ forms an orthogonal basis of the vector space $U(G_1\square G_2)$.
\end{lemma}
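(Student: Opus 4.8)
The final statement to prove is Lemma~\ref{Lemma:OrtBasisPr}, the standard fact that tensor products of orthogonal bases of $U(G_1)$ and $U(G_2)$ form an orthogonal basis of $U(G_1\square G_2)$.

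\medskip

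\textbf{Proof plan.} The plan is to verify the two defining properties of a basis: spanning and linear independence, and then additionally check orthogonality. First I would count dimensions: the vertex set of $G_1\square G_2$ is $V_1\times V_2$, so $\dim U(G_1\square G_2)=mn$, which is exactly the number of functions $f_i\otimes g_j$ in the proposed set. Hence it suffices to prove that these $mn$ functions are pairwise orthogonal with respect to the natural inner product $\langle p,q\rangle=\sum_{(x,y)}p(x,y)q(x,y)$ (or the normalized version), since an orthogonal set of nonzero vectors is automatically linearly independent, and an independent set of size equal to the dimension is a basis. I should also note each $f_i\otimes g_j$ is nonzero because $f_i$ and $g_j$ are nonzero (being basis elements).

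\medskip

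The computational core is the factorization of the inner product. For any $i,i'\in\{1,\dots,m\}$ and $j,j'\in\{1,\dots,n\}$,
\[
\langle f_i\otimes g_j,\ f_{i'}\otimes g_{j'}\rangle
=\sum_{x\in V_1}\sum_{y\in V_2} f_i(x)g_j(y)f_{i'}(x)g_{j'}(y)
=\Bigl(\sum_{x\in V_1}f_i(x)f_{i'}(x)\Bigr)\Bigl(\sum_{y\in V_2}g_j(y)g_{j'}(y)\Bigr),
\]
so the inner product on the product graph factors as a product of the inner products on $G_1$ and $G_2$. Since $f_1,\dots,f_m$ is an orthogonal basis of $U(G_1)$, the first factor vanishes unless $i=i'$; similarly the second factor vanishes unless $j=j'$. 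Therefore the product is zero unless $(i,j)=(i',j')$, and when $(i,j)=(i',j')$ it equals $\langle f_i,f_i\rangle\langle g_j,g_j\rangle>0$. This establishes pairwise orthogonality, and together with the dimension count completes the proof.

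\medskip

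There is essentially no obstacle here; the only point requiring a line of care is the interchange of the double sum and its factorization, which is valid simply because the sums are finite. If one prefers not to invoke the orthogonal-implies-independent shortcut, the same factorization directly shows that a linear relation $\sum_{i,j}c_{ij}(f_i\otimes g_j)=0$, paired against $f_{i'}\otimes g_{j'}$, forces $c_{i'j'}\langle f_{i'},f_{i'}\rangle\langle g_{j'},g_{j'}\rangle=0$ and hence $c_{i'j'}=0$; spanning then follows from the dimension count. Either route is routine.
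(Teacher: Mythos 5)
Your proof is correct. The paper states this lemma as a well-known fact and gives no proof of its own; your argument (dimension count $\dim U(G_1\square G_2)=mn$ plus the factorization $\langle f_i\otimes g_j, f_{i'}\otimes g_{j'}\rangle=\langle f_i,f_{i'}\rangle\langle g_j,g_{j'}\rangle$, which yields pairwise orthogonality and hence linear independence) is exactly the standard one the author is implicitly relying on.
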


Using Lemmas \ref{Lemma:Characters} and \ref{Lemma:OrtBasisPr}, we immediately obtain the following result.
\begin{lemma}\label{Lemma:BasisCharPr}
The set $\{\chi_u\otimes \chi_v~|~u\in \mathbb{Z}_{2}^m,v\in \mathbb{Z}_{2}^n\}$  forms an orthogonal basis of the vector space $U(m+n)$.
\end{lemma}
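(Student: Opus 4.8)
The statement to prove is Lemma~\ref{Lemma:BasisCharPr}: that $\{\chi_u\otimes\chi_v \mid u\in\mathbb{Z}_2^m,\ v\in\mathbb{Z}_2^n\}$ is an orthogonal basis of $U(m+n)$. The plan is to invoke the two results the text explicitly flags as the ingredients, namely Lemma~\ref{Lemma:Characters}(1) and Lemma~\ref{Lemma:OrtBasisPr}, and to check the one compatibility point that makes them fit together: that $H(m+n)$ is (isomorphic to) the Cartesian product $H(m)\square H(n)$.

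First I would record that $H(m)\square H(n)\cong H(m+n)$. This is immediate from the definitions in Section~\ref{Sec:Definitions}: the vertex set of $H(m)\square H(n)$ is $\mathbb{Z}_2^m\times\mathbb{Z}_2^n$, which we identify with $\mathbb{Z}_2^{m+n}$ by concatenation of coordinate tuples, and under this identification two vertices of the product are adjacent exactly when they differ in a single coordinate of one factor while agreeing on the other factor — that is, exactly when the concatenated vectors differ in exactly one of the $m+n$ coordinates. Hence the edge sets match and $U(m+n)=U(H(m)\square H(n))$, with the tensor-product operation $f_1\otimes f_2$ as defined in the text applied to functions on $H(m)$ and $H(n)$.

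Next I would apply Lemma~\ref{Lemma:Characters}(1) twice: $\{\chi_u \mid u\in\mathbb{Z}_2^m\}$ is an orthonormal (in particular orthogonal) basis of $U(m)=U(H(m))$, and likewise $\{\chi_v \mid v\in\mathbb{Z}_2^n\}$ is an orthogonal basis of $U(n)=U(H(n))$. Feeding these two bases into Lemma~\ref{Lemma:OrtBasisPr} with $G_1=H(m)$, $G_2=H(n)$ yields that $\{\chi_u\otimes\chi_v \mid u\in\mathbb{Z}_2^m,\ v\in\mathbb{Z}_2^n\}$ is an orthogonal basis of $U(H(m)\square H(n))$, which by the previous paragraph is $U(m+n)$. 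That completes the argument.

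There is essentially no obstacle here — the lemma is a direct corollary, exactly as the text states. The only thing one must not skip is the identification $H(m+n)\cong H(m)\square H(n)$; without it the inner product and adjacency structure on the product space have no a priori relation to those on $H(m+n)$, and the conclusion would be vacuous. (One could optionally also remark that the character indexing is consistent, i.e.\ $\chi_u\otimes\chi_v=\chi_{(u,v)}$ on $\mathbb{Z}_2^{m+n}$, since $(-1)^{u\cdot x}(-1)^{v\cdot y}=(-1)^{(u,v)\cdot(x,y)}$, but this is not needed for the statement as phrased.)
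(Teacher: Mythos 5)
Your proposal is correct and follows exactly the route the paper intends: the paper derives this lemma immediately from Lemma~\ref{Lemma:Characters}(1) and Lemma~\ref{Lemma:OrtBasisPr}, and your only addition is to make explicit the identification $H(m+n)\cong H(m)\square H(n)$, which the paper leaves implicit. No issues.
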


\begin{lemma}\label{Lemma:SumSpectr}
Let $f_1\in U(m)$ and $f_2\in U(n)$. Then $$\mathrm{Spec}(f_1\otimes f_2)=\mathrm{Spec}(f_1)+\mathrm{Spec}(f_2).$$
\end{lemma}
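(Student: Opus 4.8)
The plan is to reduce the claim about spectra to the already-established basis statements. Recall that $\mathrm{Spec}(f_1\otimes f_2)$ is defined via the unique decomposition into eigenspace components, so the natural route is to compute $(f_1\otimes f_2)_k$ for each $k$ and determine exactly when it is nonzero.

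First I would write $f_1=\sum_{i=0}^{m}(f_1)_i$ and $f_2=\sum_{j=0}^{n}(f_2)_j$ with $(f_1)_i\in U_i(m)$ and $(f_2)_j\in U_j(n)$. By bilinearity of the tensor product we get
\begin{equation*}
f_1\otimes f_2=\sum_{i=0}^{m}\sum_{j=0}^{n}(f_1)_i\otimes (f_2)_j.
\end{equation*}
By Lemma \ref{Lemma:Product}, each summand $(f_1)_i\otimes (f_2)_j$ lies in $U_{i+j}(m+n)$, so grouping the terms with $i+j=k$ gives
\begin{equation*}
(f_1\otimes f_2)_k=\sum_{i+j=k}(f_1)_i\otimes (f_2)_j,
\end{equation*}
because this is a valid decomposition of $f_1\otimes f_2$ into components lying in the eigenspaces $U_k(m+n)$, and such a decomposition is unique.

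Next I need to show $(f_1\otimes f_2)_k\not\equiv 0$ if and only if there exist $i\in\mathrm{Spec}(f_1)$ and $j\in\mathrm{Spec}(f_2)$ with $i+j=k$; this is exactly the statement $\mathrm{Spec}(f_1\otimes f_2)=\mathrm{Spec}(f_1)+\mathrm{Spec}(f_2)$. The ``only if'' direction is immediate: if some $(f_1)_i\otimes (f_2)_j$ appearing in the sum is nonzero then both factors are nonzero, so $i\in\mathrm{Spec}(f_1)$ and $j\in\mathrm{Spec}(f_2)$. For the ``if'' direction one must rule out cancellation among the distinct summands $(f_1)_i\otimes (f_2)_j$ with $i+j=k$. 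This is where I would invoke Lemma \ref{Lemma:BasisCharPr}: expand each $(f_1)_i$ in the character basis $\{\chi_u : \mathrm{wt}(u)=i\}$ of $U_i(m)$ and each $(f_2)_j$ in $\{\chi_v:\mathrm{wt}(v)=j\}$ of $U_j(n)$ (Lemma \ref{Lemma:Characters}, part 2). Since $\chi_u\otimes\chi_v=\chi_{(u,v)}$ and the vectors $(u,v)$ with $\mathrm{wt}(u)=i$, $\mathrm{wt}(v)=j$ are pairwise distinct across all pairs $(i,j)$, the full collection $\{\chi_u\otimes\chi_v\}$ is linearly independent (indeed a basis, by Lemma \ref{Lemma:BasisCharPr}). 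Hence no cancellation can occur: if $i\in\mathrm{Spec}(f_1)$ and $j\in\mathrm{Spec}(f_2)$, the term $(f_1)_i\otimes (f_2)_j$ contributes a nonzero vector supported on character indices $(u,v)$ of weight-pair $(i,j)$ that no other summand in $(f_1\otimes f_2)_k$ can touch, so $(f_1\otimes f_2)_k\not\equiv 0$.

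The main obstacle — if there is one — is the linear independence bookkeeping in the last step, i.e., making precise that the character expansions of the various $(f_1)_i\otimes(f_2)_j$ occupy disjoint coordinate blocks in the basis $\{\chi_u\otimes\chi_v\}$, so that a nonzero term cannot be annihilated by the others. Everything else is routine: bilinearity, Lemma \ref{Lemma:Product}, and uniqueness of the eigenspace decomposition. I would keep the write-up short, citing Lemmas \ref{Lemma:Product}, \ref{Lemma:Characters}, and \ref{Lemma:BasisCharPr} explicitly.
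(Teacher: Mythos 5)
Your proof is correct and follows exactly the route the paper intends: the paper's own proof is a one-line citation of Lemmas \ref{Lemma:Characters}, \ref{Lemma:BasisCharPr} and \ref{Lemma:Product}, and your write-up simply fills in the details of that argument (bilinearity, Lemma \ref{Lemma:Product} for each summand, and the disjoint character-basis blocks to rule out cancellation). No issues.
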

\begin{proof}
It follows from Lemmas \ref{Lemma:Characters}, \ref{Lemma:BasisCharPr} and \ref{Lemma:Product}.
\end{proof}
We will use Lemma \ref{Lemma:SumSpectr} in the proof of Lemma \ref{Lemma:OptimalConstruction}.
The following theorem is a combination of the results proved in \cite{V21} (see \cite[Theorems 3 and 4]{V21}).
\begin{theorem}\label{Th:Bound}
Let $f\in U_{[i,j]}(n)$ and $f\not\equiv 0$. Then the following statements hold:
\begin{enumerate}
  \item If $i+j\geq n$, then $|f|\geq 2^{i}$ and this bound is sharp.

  \item If $i+j\leq n$, then $|f|\geq 2^{n-j}$ and this bound is sharp.
\end{enumerate}

\end{theorem}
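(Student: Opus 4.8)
The plan is to prove the two lower bounds first and then supply matching constructions, and to get statement~1 to do all the real work. Since every neighbour of a vertex $x$ in $H(n)$ has weight $\mathrm{wt}(x)\pm 1$, the involution $f\mapsto\widetilde f$ negates each eigenvalue $\lambda_k(n)=n-2k$ (it sends $\lambda_k(n)$ to $\lambda_{n-k}(n)=-\lambda_k(n)$), so it maps $U_k(n)$ onto $U_{n-k}(n)$ and hence $U_{[i,j]}(n)$ onto $U_{[n-j,n-i]}(n)$, while $|\widetilde f|=|f|$. Thus if $i+j\le n$, then $(n-j)+(n-i)\ge n$, and statement~1 applied to $\widetilde f$ gives $|f|=|\widetilde f|\ge 2^{n-j}$; the same involution turns an optimal function in $U_{[n-j,n-i]}(n)$ into an optimal one in $U_{[i,j]}(n)$. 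So everything reduces to statement~1.

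For the lower bound in statement~1 I would induct on $n$, the cases $n=0$ and $i=0$ being trivial because a nonzero function has at least $1=2^0$ nonzeros. In the inductive step assume $i\ge 1$, fix a coordinate $r$, and consider the restrictions $f_0^r,f_1^r$, so that $|f|=|f_0^r|+|f_1^r|$ and, by Lemma~\ref{Lemma:Reduction}, $f_0^r,f_1^r\in U_{[i-1,j]}(n-1)$, $f_0^r+f_1^r\in U_{[i,j]}(n-1)$, $f_0^r-f_1^r\in U_{[i-1,j-1]}(n-1)$. If both $f_0^r$ and $f_1^r$ are nonzero, the induction hypothesis (whose index conditions follow from $i\ge 1$ and $i+j\ge n$) gives $|f_0^r|,|f_1^r|\ge 2^{i-1}$, hence $|f|\ge 2^i$. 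If exactly one of them vanishes, say $f_0^r\equiv 0$ (the case $f_1^r\equiv 0$ being symmetric via the automorphism flipping coordinate $r$), then $f_1^r$ equals both $f_0^r+f_1^r$ and $-(f_0^r-f_1^r)$, so $f_1^r\in U_{[i,j]}(n-1)\cap U_{[i-1,j-1]}(n-1)=U_{[i,j-1]}(n-1)$; as $f_1^r\not\equiv 0$ this forces $j>i$, and since $i+(j-1)\ge n-1$ the induction hypothesis gives $|f|=|f_1^r|\ge 2^i$.

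For sharpness of statement~1, given $i\le j\le n$ with $i+j\ge n$, I would partition the $n$ coordinates into $i+j-n$ blocks of size $1$, $n-j$ blocks of size $2$, and one block of size $j-i$ (these counts are $\ge 0$ and add up to $n$), and take the tensor product of: $\chi_{(1)}\in U_1(1)$ on each size-$1$ block; $\chi_{(1,0)}-\chi_{(0,1)}\in U_1(2)$ on each size-$2$ block; and $\mathbf 1_{\{0\}}$, the indicator of the all-zero vertex, on the size-$(j-i)$ block, its spectrum being $\{0,1,\dots,j-i\}$ by Lemma~\ref{Lemma:Characters}. By Lemma~\ref{Lemma:SumSpectr} the resulting $f$ has spectrum $\{i\}+\{0,1,\dots,j-i\}=\{i,\dots,j\}$, so $f\in U_{[i,j]}(n)$; and since $S(f_1\otimes f_2)=S(f_1)\times S(f_2)$, we get $|f|=2^{\,i+j-n}\cdot 2^{\,n-j}\cdot 1=2^i$. (For $i=j$ the last block is empty, recovering optimal single-eigenspace functions.)

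I expect the main obstacle to be the degenerate branch of the induction — when a restriction $f_k^r$ is identically zero — where the crude bound $|f|\ge 2^{i-1}$ is short by a factor of two: one must observe that the vanishing of one restriction forces the other into the strictly smaller space $U_{[i,j-1]}(n-1)$, which is exactly the regime in which the induction hypothesis yields $2^i$ (and that this space is automatically nonzero there). On the construction side the only thing to verify is that the spectrum of the tensor product stays inside $[i,j]$, which Lemma~\ref{Lemma:SumSpectr} reduces to the sumset identity above.
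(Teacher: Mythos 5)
Your proposal is correct. Note, however, that the paper does not actually prove Theorem~\ref{Th:Bound}: it imports it from \cite{V21} (Theorems 3 and 4 there), so there is no in-paper argument to compare against line by line. Your reduction of statement~2 to statement~1 via $f\mapsto\widetilde f$ is exactly the symmetry the paper itself exploits (Lemma~\ref{Lemma:SymmetricIntSpaces}), and your induction for statement~1 is a clean self-contained proof: the generic branch gives $|f_0^r|+|f_1^r|\ge 2\cdot 2^{i-1}$, and in the degenerate branch you correctly observe that $f_1^r$ lies in $U_{[i,j]}(n-1)\cap U_{[i-1,j-1]}(n-1)=U_{[i,j-1]}(n-1)$, which both rules out $i=j$ and restores the full bound $2^i$ via the induction hypothesis (the index conditions $i+(j-1)\ge n-1$ and, in the generic branch, $(i-1)+j\ge n-1$ are exactly inherited from $i+j\ge n$; the only pedantic point is that for $j=n$ the upper index must be truncated to $n-1$, which changes nothing). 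The sharpness construction is likewise sound and is essentially the paper's Lemma~\ref{Lemma:OptimalConstruction} with $k=i+j-n$ blocks $\varphi_1$, $\ell=n-j$ blocks equivalent to $\varphi_2$, and $r=j-i$: the sumset computation $\{i\}+\{0,\dots,j-i\}=\{i,\dots,j\}$ and $|f|=2^{i+j-n}\cdot 2^{n-j}=2^i$ both check out. In short, a correct independent proof of a result the paper only cites.
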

We will use Theorem \ref{Th:Bound} in the proof of Lemma \ref{Lemma:3Values}.

\begin{lemma}\label{Lemma:CorrImm}
 Let $f\in U_i(n)$, where $1\leq i\leq n$. Then for every $(n-i+1)$-face $\Gamma$ it holds $\sum_{x\in \Gamma}f(x)=0$.
\end{lemma}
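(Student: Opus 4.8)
The plan is to use the character decomposition of $f$ together with the observation that an $(n-i+1)$-face is itself (a translate of) a subcube $H(n-i+1)$. Since $f \in U_i(n)$, by Lemma \ref{Lemma:Characters} we can write $f = \sum_{\mathrm{wt}(u) = i} c_u \chi_u$. Fix an $(n-i+1)$-face $\Gamma = \Gamma_{i_1,\ldots,i_{i-1}}^{a_1,\ldots,a_{i-1}}$; here $i-1$ coordinates are frozen and $n-i+1$ coordinates are free. The key computation is to evaluate $\sum_{x \in \Gamma} \chi_u(x)$ for each $u$ of weight $i$. Writing $\chi_u(x) = \prod_{k : u_k = 1}(-1)^{x_k}$, the sum over $\Gamma$ factors: frozen coordinates contribute a fixed sign, while each free coordinate $k$ with $u_k = 1$ contributes a factor $\sum_{x_k \in \{0,1\}}(-1)^{x_k} = 0$. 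So $\sum_{x\in\Gamma}\chi_u(x) = 0$ unless every coordinate in the support of $u$ is frozen. But $u$ has weight $i$ and only $i-1$ coordinates are frozen, so this is impossible. Hence every term vanishes and $\sum_{x\in\Gamma} f(x) = 0$.

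First I would set up the notation for a general $(n-i+1)$-face and reduce to the case of a single character $\chi_u$ by linearity. Next I would carry out the factorization of $\sum_{x\in\Gamma}\chi_u(x)$ over frozen versus free coordinates, noting the vanishing factor $1 + (-1) = 0$ attached to any free coordinate lying in $\mathrm{supp}(u)$. Then the pigeonhole step ($|\mathrm{supp}(u)| = i > i-1 = $ number of frozen coordinates) forces at least one such free coordinate, killing the sum. Finally, summing over all $u$ with the coefficients $c_u$ gives the claim.

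There is no real obstacle here; the only thing to be a little careful about is the bookkeeping of which coordinates are frozen and the edge case $i = n$, where an $(n-i+1)$-face is an edge (a $1$-face) and $u$ is the all-ones vector: then the two endpoints of the edge get opposite signs from $\chi_u$, so the sum is zero, consistent with the general argument. Alternatively, one could phrase the whole thing via Lemma \ref{Lemma:CorrImm}'s natural companion — identifying $\Gamma$ with $H(n-i+1)$ and using that $f|_\Gamma$ lies in $U_{[1,\,n-i+1]}(n-i+1)$ by repeated application of Lemma \ref{Lemma:Reduction}, hence is orthogonal to the constant function $\chi_{\mathbf 0}$ — but the direct character computation is shorter and self-contained.
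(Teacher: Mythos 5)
Your proposal is correct and is essentially the paper's own argument: the paper states that $\mathbf{1}_\Gamma$ is orthogonal to every $\chi_u$ of weight $i$ (which is exactly your factorization computation, left as "easy to check") and then concludes by Lemma \ref{Lemma:Characters} and linearity. You have simply written out the verification explicitly, including the correct pigeonhole step comparing $\mathrm{wt}(u)=i$ with the $i-1$ frozen coordinates.
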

\begin{proof}
Suppose that $\Gamma$ is an $(n-i+1)$-face.
It is easy to check that the function $\mathbf{1}_\Gamma$ is orthogonal to $\chi_u$ for any $u\in \mathbb{Z}_{2}^n$ of weight $i$.
Then by Lemma \ref{Lemma:Characters} we obtain that $\mathbf{1}_\Gamma$ is orthogonal to an arbitrary function from the space $U_i(n)$.
So, the functions $\mathbf{1}_\Gamma$ and $f$ are orthogonal and we have
$\sum_{x\in \Gamma}f(x)=0$.
\end{proof}

The following result was obtained in \cite{K18} (see the last paragraph in the proof of Theorem 1).
\begin{lemma}\label{Lemma:CharFunction}
Let $\{T_0,T_1\}$ be a $[t]$-trade in $H(n)$. Then $\mathbf{1}_{T_0\cup T_1}\in \mathcal{RM}(n-t-1,n)$.
\end{lemma}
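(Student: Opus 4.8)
The plan is to deduce the statement from a standard description of Reed--Muller codes, which one may view as a mod-$2$ counterpart of Lemma~\ref{Lemma:CorrImm}: a Boolean function $g$ on $\mathbb{Z}_2^n$ lies in $\mathcal{RM}(n-t-1,n)$ if and only if $|S(g)\cap\Gamma|$ is even for every $(n-t)$-face $\Gamma$. Granting this, the lemma is immediate. Let $\{T_0,T_1\}$ be a $[t]$-trade and let $\Gamma$ be an arbitrary $(n-t)$-face. Since $T_0$ and $T_1$ are disjoint,
$$\bigl|(T_0\cup T_1)\cap\Gamma\bigr|=|T_0\cap\Gamma|+|T_1\cap\Gamma|=2\,|T_0\cap\Gamma|,$$
which is even by the definition of a $[t]$-trade. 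Hence $S(\mathbf{1}_{T_0\cup T_1})=T_0\cup T_1$ has even intersection with every $(n-t)$-face, so $\mathbf{1}_{T_0\cup T_1}\in\mathcal{RM}(n-t-1,n)$.

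So the real content is the description of $\mathcal{RM}(n-t-1,n)$, which I would establish over $\mathbb{F}_2$ as follows. By the classical identity $\mathcal{RM}(n-t-1,n)=\mathcal{RM}(t,n)^{\perp}$, a function $g$ lies in $\mathcal{RM}(n-t-1,n)$ exactly when $\sum_{x\in\mathbb{Z}_2^n}g(x)h(x)=0$ in $\mathbb{F}_2$ for every $h$ in a generating set of $\mathcal{RM}(t,n)$; I would take this generating set to be the characteristic functions $\mathbf{1}_\Gamma$ of $(n-t)$-faces $\Gamma$. Each such $\mathbf{1}_\Gamma$ equals $\prod_{k=1}^{t}(x_{i_k}+a_k+1)$ over $\mathbb{F}_2$, hence has algebraic degree $t$ and belongs to $\mathcal{RM}(t,n)$; conversely, every monomial $\prod_{i\in S}x_i$ with $|S|\le t$ is an $\mathbb{F}_2$-linear combination of such indicators (for $|S|=t$ it is $\mathbf{1}_\Gamma$ with $\Gamma=\{x:x_i=1\ \text{for}\ i\in S\}$, and for $|S|<t$ one expands $\prod_{i\in S}x_i\cdot\prod_{i\in T\setminus S}\bigl((x_i+1)+x_i\bigr)$ for a fixed $T\supseteq S$ with $|T|=t$ into a sum of indicators of $(n-t)$-faces). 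Since $\sum_{x\in\mathbb{Z}_2^n}g(x)\mathbf{1}_\Gamma(x)\equiv|S(g)\cap\Gamma|\pmod 2$, the required equivalence follows. (The case $t=n$ is vacuous, as $H(n)$ has no $[n]$-trade, so one may assume $t\le n-1$ and the duality applies.)

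The main --- in fact the only non-routine --- obstacle is the structural input about Reed--Muller codes: the duality $\mathcal{RM}(n-t-1,n)=\mathcal{RM}(t,n)^{\perp}$ together with the fact that the $(n-t)$-faces generate $\mathcal{RM}(t,n)$. Both are classical; alternatively one can bypass the duality and argue directly via Möbius inversion on the Boolean lattice, which expresses the coefficient of each degree-$d$ monomial in the algebraic normal form of $g$ as the parity of the sum of $g$ over a suitable $(n-d)$-face through the origin, combined with the translation invariance of $\mathcal{RM}(n-t-1,n)$ to pass from axis-parallel faces through the origin to all $(n-t)$-faces. Everything else is direct bookkeeping with the definitions of a $[t]$-trade and of the support.
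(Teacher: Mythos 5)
Your proof is correct. Note that the paper does not prove this lemma at all: it is imported verbatim from Krotov \cite{K18} (``see the last paragraph in the proof of Theorem 1''), so there is nothing in the text to compare against line by line. Your argument is the standard one and it is complete: the reduction of the lemma to the claim that membership in $\mathcal{RM}(n-t-1,n)$ is equivalent to even intersection with every $(n-t)$-face is immediate from disjointness of $T_0$ and $T_1$, and your proof of that claim is sound --- the duality $\mathcal{RM}(t,n)^{\perp}=\mathcal{RM}(n-t-1,n)$, the observation that $\mathbf{1}_{\Gamma}=\prod_{k=1}^{t}(x_{i_k}+a_k+1)$ lies in $\mathcal{RM}(t,n)$, and the expansion $\prod_{i\in S}x_i=\prod_{i\in S}x_i\cdot\prod_{i\in T\setminus S}\bigl((x_i+1)+x_i\bigr)$ showing that the face indicators span $\mathcal{RM}(t,n)$ are all correct over $\mathbb{F}_2$. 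You were also right to dispose of the degenerate case $t=n$ (no $[n]$-trade exists since $T_0$ and $T_1$ are disjoint and nonempty), so $n-t-1\geq 0$ and the duality applies. The only caveat is presentational: since the classical facts you invoke (duality of Reed--Muller codes, generation by face indicators) carry essentially all the weight, in a paper one would either cite them precisely (e.g.\ \cite{MWS77}) or, as the author does, cite \cite{K18} for the whole statement; your write-up is the right choice if a self-contained proof is wanted.
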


The following fact is well known in coding theory (for example, see \cite[Chapter 13, Theorem 5]{MWS77} or \cite[Chapter 4, Theorem 8]{C20}).
\begin{lemma}\label{Lemma:CodewordsRM}
Any Boolean function from $\mathcal{RM}(r,n)$ of weight $2^{n-r}$ is the characteristic function of an $(n-r)$-dimensional affine subspace
of $\mathbb{Z}_2^n$.
\end{lemma}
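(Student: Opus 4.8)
The plan is to prove, by induction on $n$, the slightly stronger statement that for $0\le r\le n$ every nonzero $f\in\mathcal{RM}(r,n)$ has $|f|\ge 2^{n-r}$, with $|f|=2^{n-r}$ if and only if $S(f)$ is an $(n-r)$-dimensional affine subspace of $\mathbb{Z}_2^n$; the induction needs both halves at once, so establishing the minimum-distance bound along the way is free. The extreme orders are handled directly: for $r=0$ the code $\mathcal{RM}(0,n)$ consists only of the zero function and the all-ones function, whose support is the whole $n$-dimensional space; for $r=n$ the code is all of $U(n)$, and its nonzero functions of least weight are precisely the indicators of single points, i.e.\ $0$-dimensional affine subspaces. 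This also covers $n\le 1$, so from now on I fix $n\ge 2$ and $1\le r\le n-1$.

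Given a nonzero $f\in\mathcal{RM}(r,n)$, I would split it along the last coordinate: writing $f=g+x_nh$ with $g,h$ Boolean functions in $x_1,\dots,x_{n-1}$, the restrictions $f_0:=f|_{x_n=0}=g$ and $f_1:=f|_{x_n=1}=g+h$ lie in $\mathcal{RM}(r,n-1)$, while the discrete derivative $f_0+f_1=h$ lies in $\mathcal{RM}(r-1,n-1)$ since $\deg(x_nh)\le r$ forces $\deg h\le r-1$. The two identities $|f|=|f_0|+|f_1|$ and $|f_0+f_1|=|f_0|+|f_1|-2\,|S(f_0)\cap S(f_1)|$ drive the rest. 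If $f_0=f_1$, then $f$ is independent of $x_n$; the induction hypothesis applied to $f_0\in\mathcal{RM}(r,n-1)$ gives $|f|=2|f_0|\ge 2\cdot 2^{(n-1)-r}=2^{n-r}$, and in case of equality $S(f_0)$ is an $(n-1-r)$-dimensional affine subspace $A$ of $\mathbb{Z}_2^{n-1}$, so that $S(f)=A\times\mathbb{Z}_2$ is an $(n-r)$-dimensional affine subspace. If $f_0\ne f_1$, then $h$ is a nonzero element of $\mathcal{RM}(r-1,n-1)$, so by induction $|f_0+f_1|\ge 2^{(n-1)-(r-1)}=2^{n-r}$, and combined with $|f_0+f_1|\le|f_0|+|f_1|=|f|$ this already yields $|f|\ge 2^{n-r}$.

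For the characterization in the case $f_0\ne f_1$ I would assume $|f|=2^{n-r}$; then equality holds throughout $|f|\ge|f_0+f_1|\ge 2^{n-r}$, so $|f_0+f_1|=2^{n-r}$, and the identity above forces $S(f_0)\cap S(f_1)=\varnothing$. By induction the set $B:=S(f_0+f_1)=S(f_0)\sqcup S(f_1)$ is an $(n-r)$-dimensional affine subspace of $\mathbb{Z}_2^{n-1}$. If one of $f_0,f_1$ is zero, say $f_1\equiv 0$, then $S(f_0)=B$ and $S(f)=B\times\{0\}$, which is done; otherwise both are nonzero, each of weight $\ge 2^{(n-1)-r}$ by induction, and since they sum to $2^{n-r}=2\cdot 2^{(n-1)-r}$ each has weight exactly $2^{(n-1)-r}$, so by induction $S(f_0)$ and $S(f_1)$ are disjoint $(n-1-r)$-dimensional affine subspaces whose union is the $(n-r)$-dimensional affine subspace $B$. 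From here a short affine-geometry argument shows that $S(f_0)$ must be a coset of a hyperplane of the direction space of $B$, that $S(f_1)=S(f_0)+c$ for any $c$ in the direction space of $B$ but not in that of $S(f_0)$, and hence that $S(f)=(S(f_0)\times\{0\})\cup(S(f_1)\times\{1\})$, after translation by a base point, equals $(V\times\{0\})\oplus\langle(c,1)\rangle$, where $V$ is the direction space of $S(f_0)$; this is a linear subspace of dimension $(n-1-r)+1=n-r$, so $S(f)$ is an $(n-r)$-dimensional affine subspace. This last reconstruction --- that a ``staircase'' formed from two parallel codimension-one slices of a flat, one placed on $x_n=0$ and one on $x_n=1$, is again a flat --- is the only step with genuine content; I expect it, together with keeping the equality cases cleanly organized, to be where the care is needed, while everything else is bookkeeping with weights and the degree drop of the derivative.
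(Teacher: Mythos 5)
Your proof is correct. Note, however, that the paper does not prove this lemma at all: it is quoted as a classical fact with references to MacWilliams--Sloane (Ch.~13, Thm.~5) and Carlet (Ch.~4, Thm.~8), and the argument you give --- simultaneous induction on $n$ of the minimum distance $2^{n-r}$ of $\mathcal{RM}(r,n)$ and the characterization of its minimum-weight codewords, via the decomposition $f=g+x_nh$ with $f_0,f_1\in\mathcal{RM}(r,n-1)$ and $f_0+f_1\in\mathcal{RM}(r-1,n-1)$ --- is precisely the standard textbook proof from those sources. All the equality-case bookkeeping checks out, including the final step where the two disjoint $(n-1-r)$-flats partitioning the $(n-r)$-flat $B$ are forced to be parallel cosets of a common hyperplane of the direction space of $B$ (since the complement of an index-two coset in $B$ is the other coset), so the ``staircase'' is again a flat.
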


The following lemma was proved in \cite{K18}.
\begin{lemma}[\cite{K18}, Proposition 1]\label{Lemma:SpaceSplit}
An affine subspace $T\subset \mathbb{Z}_2^n$ of dimension $t+1$ can be split into a $[t]$-trade $\{T_0,T_1\}$
if and only if it is a translation of the linear span of mutually disjoint base subsets.
\end{lemma}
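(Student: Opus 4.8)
This is quoted from \cite{K18}; I sketch the argument I would give. Since the family of $(n-t)$-faces is permuted by every translation of $\mathbb{Z}_2^n$, being splittable into a $[t]$-trade is translation-invariant, so one may assume $T$ is a \emph{linear} subspace and must then show that $T$ has a basis $v_1,\dots,v_{t+1}$ of nonzero vectors with pairwise disjoint supports (the concrete content of ``span of mutually disjoint base subsets''). Identify $T$ with $\mathbb{Z}_2^{t+1}$ via a basis, discard the coordinates on which $T$ vanishes, and let $\alpha_j\in T^{*}$ be the restriction of the $j$-th coordinate functional; the $\alpha_j$ span $T^{*}$. The dictionary I use throughout: for a $t$-set $I\subseteq\{1,\dots,n\}$ and values for which the face $\Gamma$ meets $T$, the set $\Gamma\cap T$ is a coset of $\{x\in T:x_j=0\ (j\in I)\}$, so a colouring of $T$ by a nonzero functional $\ell$ balances $\Gamma\cap T$ precisely when $\ell\notin\langle\alpha_j:j\in I\rangle$.

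For the ``if'' direction, when $T=\langle v_1,\dots,v_{t+1}\rangle$ with the $v_r$ disjointly supported, each $\alpha_j$ coincides with one of the dual vectors $v_r^{*}$, so the $\alpha_j$ take exactly $t+1$ values, forming a basis $\beta_1,\dots,\beta_{t+1}$ of $T^{*}$. Put $\ell=\beta_1+\dots+\beta_{t+1}$ and $T_\varepsilon=\ell^{-1}(\varepsilon)$: for any $t$-set $I$ the subspace $\langle\alpha_j:j\in I\rangle$ is spanned by at most $t$ of the $\beta_r$ and, by uniqueness of expansions in that basis, does not contain $\ell$; by the dictionary $\{T_0,T_1\}$ is a $[t]$-trade, and it splits $T$ since $\ell\neq0$.

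For the converse, let $\{T_0,T_1\}$ be a $[t]$-trade splitting $T$; the first and main step is that the colouring is affine-linear. Fix an information set $J$ ($t+1$ coordinates on which $T$ projects bijectively) and, for $j\in J$, let $w^{(j)}\in T$ be the unique element equal to $1$ at coordinate $j$ and to $0$ at the other coordinates of $J$. Fixing the $t$ coordinates in $J\setminus\{j\}$ to arbitrary values cuts $T$ down to a two-element set $\{x,x+w^{(j)}\}$, which the trade property forces to be bichromatic; letting the values vary shows $\mathbf{1}_{T_1}$ is anti-invariant under translation by each $w^{(j)}$. As $\{w^{(j)}:j\in J\}$ is a basis of $T$, the quantity $\mathbf{1}_{T_1}(y)$ depends only on the parity of the number of these vectors occurring in the expansion of $y$, so $\mathbf{1}_{T_1}$ differs by a constant from a nonzero linear functional $\ell$, and $T_0,T_1$ are the fibres of $\ell$. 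The dictionary now applies with this $\ell$: taking the zero values always yields a nonempty face, so the trade property is equivalent to $\ell\notin\langle\alpha_j:j\in I\rangle$ for every $t$-set $I$, i.e.\ that every subset-sum representation $\ell=\sum_{j\in K}\alpha_j$ uses $|K|\geq t+1$ terms.

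It remains to extract the decomposition. Any $t+2$ vectors of $T^{*}$ are dependent, so $\ell$ admits a minimal representation of length exactly $t+1$, with $\{\alpha_j:j\in K\}$ a basis. If the $\alpha_j$ assumed $t+2$ or more distinct nonzero values, pick a value $\beta$ distinct from all $\alpha_j$, $j\in K$; writing $\beta=\sum_{j\in K'}\alpha_j$ with $K'\subseteq K$ (so $|K'|\geq2$) and substituting yields a representation of $\ell$ of length $\leq t$, a contradiction. Hence the $\alpha_j$ take exactly $t+1$ distinct values $\beta_1,\dots,\beta_{t+1}$, necessarily a basis of $T^{*}$. Setting $B_r=\{j:\alpha_j=\beta_r\}$ (a partition of the support of $T$) and letting $v_r$ be the vector dual to $\beta_r$ in the basis of $T$ dual to $\{\beta_1,\dots,\beta_{t+1}\}$, one unwinds the definitions to get $v_r=\mathbf{1}_{B_r}$; thus $v_1,\dots,v_{t+1}$ is a basis of $T$ with pairwise disjoint supports, as required. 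I expect the affine-linearity step to demand the most care — identifying exactly which $(n-t)$-faces meet $T$ in two points and verifying that those elementary constraints already pin the colouring to one linear functional — together with the final bookkeeping that turns ``no short representations'' into an explicit disjointly supported basis.
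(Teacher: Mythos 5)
The paper does not prove this lemma at all --- it is quoted verbatim from Krotov \cite{K18} (Proposition 1 there) and used as a black box in the proof of Theorem \ref{Th:Main} --- so there is no in-paper argument to compare against; what you have written is a self-contained proof of the cited result, and I have checked it and believe it is correct. The key steps all hold up: the reduction to linear $T$ is legitimate because translations permute the $(n-t)$-faces; the ``dictionary'' is right, since for a linear $T$ the nonempty sets $\Gamma\cap T$ with $\Gamma$ ranging over the faces on a fixed $t$-set $I$ are exactly the cosets of $T_I=\bigcap_{j\in I}\ker\alpha_j$, and a nonzero (affine) functional $\ell$ is balanced on all of them precisely when $\ell\notin\langle\alpha_j:j\in I\rangle$; the affine-linearity of the colouring follows correctly from the two-point intersections $\Gamma_{J\setminus\{j\}}^{a}\cap T=\{x,x+w^{(j)}\}$, which force $\mathbf{1}_{T_1}(x+w^{(j)})=\mathbf{1}_{T_1}(x)+1$ for a basis $\{w^{(j)}\}$ of $T$; and the final combinatorial step (a minimal subset-sum representation of $\ell$ has length exactly $t+1$ and consists of a basis, and the existence of any further nonzero value $\beta=\sum_{j\in K'}\alpha_j$ with $|K'|\ge 2$ would shorten it to length at most $t$) correctly pins down the $\alpha_j$ to exactly $t+1$ nonzero values $\beta_1,\dots,\beta_{t+1}$, whose level sets $B_r$ give the disjointly supported basis $v_r=\mathbf{1}_{B_r}$. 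The one point worth making explicit if this were written out in full is the nonemptiness bookkeeping: in the converse you must use, for each $t$-set $I$, at least one face meeting $T$ (you do, via $a=0$), and in the forward direction you must note $\ell=\beta_1+\dots+\beta_{t+1}\neq 0$ so that both parts $T_0,T_1$ are nonempty; both are present in your sketch.
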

We will use Lemmas \ref{Lemma:CorrImm}, \ref{Lemma:CharFunction}, \ref{Lemma:CodewordsRM} and \ref{Lemma:SpaceSplit} in the proof of Theorem \ref{Th:Main}.
The following fact is well known in spectral graph theory (for example, see \cite[Section 1.3.6]{BH12}).
\begin{lemma}\label{Lemma:Bipartite}
Let $G$ be a bipartite graph. If $f$ is a $\lambda$-eigenfunction of $G$, then $f'$ is a (-$\lambda$)-eigenfunction of $G$.
\end{lemma}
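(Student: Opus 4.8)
The plan is to verify the defining equation (\ref{Eq:Eigenfunction}) directly for the function $f'$ with eigenvalue $-\lambda$, using the single structural fact that in a bipartite graph with parts $V_1$ and $V_2$ every edge joins a vertex of $V_1$ to a vertex of $V_2$. So first I would fix an arbitrary vertex $x$ of $G$ and compute $\sum_{y\in N(x)}f'(y)$, splitting into two cases according to whether $x\in V_1$ or $x\in V_2$.

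In the first case $x\in V_1$, so $f'(x)=f(x)$, and every $y\in N(x)$ lies in $V_2$, whence $f'(y)=-f(y)$; therefore $\sum_{y\in N(x)}f'(y)=-\sum_{y\in N(x)}f(y)=-\lambda f(x)=-\lambda f'(x)$, where the middle equality uses that $f$ is a $\lambda$-eigenfunction of $G$. In the second case $x\in V_2$, so $f'(x)=-f(x)$, and every $y\in N(x)$ lies in $V_1$, whence $f'(y)=f(y)$; therefore $\sum_{y\in N(x)}f'(y)=\sum_{y\in N(x)}f(y)=\lambda f(x)=-\lambda f'(x)$. In both cases equation (\ref{Eq:Eigenfunction}) holds for $f'$ with eigenvalue $-\lambda$, as required.

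It remains to note that $f'\not\equiv 0$: since $f\not\equiv 0$ and $f'$ agrees with $f$ up to sign at every vertex, $f'$ has the same (nonempty) support as $f$, so $f'$ is a genuine $(-\lambda)$-eigenfunction. I do not expect any real obstacle here — the argument is essentially a one-line case check — and the only point requiring a little care is the bookkeeping of signs, which is automatically consistent precisely because passing to a neighbor always switches the part in a bipartite graph.
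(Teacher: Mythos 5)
Your proof is correct and is exactly the standard argument; the paper itself does not prove this lemma but cites it as well known (referring to Brouwer--Haemers), and the direct case check on the two parts, together with the observation that $S(f')=S(f)\neq\emptyset$, is precisely the expected verification.
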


Since $H(n)$ is bipartite and $\lambda_{i}(n)=-\lambda_{n-i}(n)$, by Lemma \ref{Lemma:Bipartite} we immediately obtain the
following result.
\begin{lemma}\label{Lemma:SymmetricSpaces}
If $f\in U_i(n)$, then $\widetilde{f}\in U_{n-i}(n)$.
\end{lemma}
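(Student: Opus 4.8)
The plan is to recognize the map $f\mapsto\widetilde{f}$ as the operation $f\mapsto f'$ attached to a natural bipartition of $H(n)$, and then quote Lemma \ref{Lemma:Bipartite}. First I would fix the bipartition of $H(n)$ with parts $V_1=\{x\in\mathbb{Z}_2^n~|~\mathrm{wt}(x)\text{ even}\}$ and $V_2=\{x\in\mathbb{Z}_2^n~|~\mathrm{wt}(x)\text{ odd}\}$. This is a genuine bipartition: adjacent vertices of $H(n)$ differ in exactly one coordinate, hence their weights differ by $1$ and have opposite parities. Comparing the definitions, $\widetilde{f}(x)=(-1)^{x_1+\ldots+x_n}f(x)=(-1)^{\mathrm{wt}(x)}f(x)$ equals $f(x)$ for $x\in V_1$ and $-f(x)$ for $x\in V_2$, so $\widetilde{f}=f'$ for this particular choice of parts.

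Next I would dispose of the degenerate case. If $f\equiv 0$, then $\widetilde{f}\equiv 0$, which lies in $U_{n-i}(n)$ trivially. Otherwise $f\not\equiv 0$, so by hypothesis $f$ is a $\lambda_i(n)$-eigenfunction of $H(n)$, and Lemma \ref{Lemma:Bipartite} applied to the bipartite graph $H(n)$ shows that $f'=\widetilde{f}$ is a $(-\lambda_i(n))$-eigenfunction of $H(n)$. Finally I would use the arithmetic identity $-\lambda_i(n)=-(n-2i)=n-2(n-i)=\lambda_{n-i}(n)$ to conclude that $\widetilde{f}$ belongs to the $\lambda_{n-i}(n)$-eigenspace of $H(n)$, i.e. $\widetilde{f}\in U_{n-i}(n)$, as claimed.

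There is essentially no obstacle: the statement is an immediate corollary of Lemma \ref{Lemma:Bipartite}. The only items needing a line of care are the identification $\widetilde{f}=f'$ for the weight-parity bipartition and the separate treatment of $f\equiv 0$, which is needed only because the term ``eigenfunction'' is reserved for functions that are not identically zero while the eigenspace $U_{n-i}(n)$ contains the zero function.
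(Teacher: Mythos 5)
Your proposal is correct and follows exactly the paper's route: identify $\widetilde{f}$ with $f'$ for the weight-parity bipartition of $H(n)$, apply Lemma \ref{Lemma:Bipartite}, and use $-\lambda_i(n)=\lambda_{n-i}(n)$. The only difference is that you spell out the identification and the trivial case $f\equiv 0$, which the paper leaves implicit.
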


Using the previous lemma for $U_{k}(n)$, where $i\le k\le j$, we obtain the following result.
\begin{lemma}\label{Lemma:SymmetricIntSpaces}
If $f\in U_{[i,j]}(n)$, then $\widetilde{f}\in U_{[n-j,n-i]}(n)$.
\end{lemma}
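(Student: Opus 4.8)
The plan is to deduce this directly from Lemma \ref{Lemma:SymmetricSpaces} by working componentwise in the eigenspace decomposition. Given $f\in U_{[i,j]}(n)$, first I would write its unique decomposition $f=\sum_{k=i}^{j}f_k$ with $f_k\in U_k(n)$ for each $i\le k\le j$ (the components $f_0,\dots,f_{i-1}$ and $f_{j+1},\dots,f_n$ vanish since $f$ lies in $U_{[i,j]}(n)$).

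Next I would observe that the map $g\mapsto\widetilde{g}$ is linear: by definition $\widetilde{g}(x_1,\dots,x_n)=(-1)^{x_1+\dots+x_n}g(x_1,\dots,x_n)$ is just pointwise multiplication by the fixed function $(-1)^{x_1+\dots+x_n}$, so $\widetilde{f}=\sum_{k=i}^{j}\widetilde{f_k}$. Then, applying Lemma \ref{Lemma:SymmetricSpaces} to each summand, we get $\widetilde{f_k}\in U_{n-k}(n)$ for every $k$ with $i\le k\le j$.

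Finally I would note that as $k$ runs over $\{i,i+1,\dots,j\}$, the index $n-k$ runs over $\{n-j,n-j+1,\dots,n-i\}$, hence
$$\widetilde{f}=\sum_{k=i}^{j}\widetilde{f_k}\in\bigoplus_{k=i}^{j}U_{n-k}(n)=U_{[n-j,n-i]}(n),$$
which is exactly the claim. I do not anticipate any real obstacle here: the argument is a short bookkeeping of indices once linearity of the tilde operation and Lemma \ref{Lemma:SymmetricSpaces} are in hand, which is presumably why the paper states it as an immediate consequence of the previous lemma.
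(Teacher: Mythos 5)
Your proposal is correct and follows exactly the route the paper intends: the paper derives this lemma by applying Lemma \ref{Lemma:SymmetricSpaces} to each component $f_k\in U_k(n)$ for $i\le k\le j$, which is precisely your componentwise argument using the linearity of $g\mapsto\widetilde{g}$. No issues.
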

We will use Lemma \ref{Lemma:SymmetricIntSpaces} in the proof of Theorem \ref{Th:Main}.

\section{Constructions of functions with the minimum cardinality of the support}\label{Sec:Constructions}
In this section, we give constructions of functions that are optimal in the space $U_{[i,j]}(n)$. We also find the spectrum of these functions.

For $k\geq 1$, we define a function $\varphi_k$ on the vertices of $H(k)$ by the following rule:
$$
\varphi_k(x)=\begin{cases}
1,&\text{if $x$ is the all-zeros vector;}\\
-1,&\text{if $x$ is the all-ones vector;}\\
0,&\text{otherwise.}
\end{cases}
$$

For $k\geq 1$, we define a function $\psi_k$ on the vertices of $H(k)$ by the following rule:
$$
\psi_k(x)=\begin{cases}
1,&\text{if $x$ is the all-zeros vector;}\\
1,&\text{if $x$ is the all-ones vector;}\\
0,&\text{otherwise.}
\end{cases}
$$

For $k\geq 1$, we define a function $I_k$ on the vertices of $H(k)$ by the following rule:
$$
I_k(x)=\begin{cases}
1,&\text{if $x$ is the all-zeros vector;}\\
0,&\text{otherwise.}
\end{cases}
$$

The functions $\varphi_3$, $\psi_3$ and $I_3$ are shown in Figure \ref{F}:

\begin{figure}[H]
\includegraphics[scale=0.27]{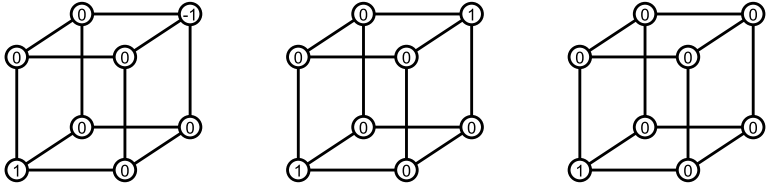}
\caption{Functions $\varphi_3$, $\psi_3$ and $I_3$ in $H(3)$}\label{F}
\end{figure}

\begin{lemma}\label{Lemma:SpecFunctions}
The following statements are true:
\begin{enumerate}
\item $\mathrm{Spec}(\varphi_{2k+1})=\{1,3,\ldots,2k+1\}$ for $k\geq 1$ and $\mathrm{Spec}(\varphi_1)=\{1\}$.

\item  $\mathrm{Spec}(\varphi_{2k})=\{1,3,\ldots,2k-1\}$ for $k\geq 2$ and $\mathrm{Spec}(\varphi_2)=\{1\}$.

\item $\mathrm{Spec}(\psi_{2k+1})=\{0,2,\ldots,2k\}$ for $k\geq 1$ and $\mathrm{Spec}(\psi_1)=\{0\}$.

\item $\mathrm{Spec}(I_k)=\{0,1,\ldots,k\}$ for $k\geq 1$.
\end{enumerate}
\end{lemma}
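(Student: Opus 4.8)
To compute the spectra of $\varphi_k$, $\psi_k$, and $I_k$, the natural tool is Lemma \ref{Lemma:SumSpectr}, since each of these functions factors nicely as a tensor product over the coordinates. First I would observe that on $H(1)$, we have the basis $\chi_0, \chi_1$ of $U(1)$ with $\chi_0 \in U_0(1)$ and $\chi_1 \in U_1(1)$; explicitly $\chi_0 \equiv 1$ and $\chi_1(0)=1$, $\chi_1(1)=-1$. Hence the single-coordinate functions $a \mapsto (\mathbf 1_{\{0\}})$, $(\mathbf 1_{\{1\}})$, $\psi_1$, $\varphi_1$ all have easily computed spectra: $\mathrm{Spec}(I_1) = \mathrm{Spec}(\mathbf 1_{\{0\}}) = \{0,1\}$ (as $\mathbf 1_{\{0\}} = \tfrac12(\chi_0+\chi_1)$), $\mathrm{Spec}(\psi_1) = \{0\}$ (as $\psi_1 = \chi_0$), and $\mathrm{Spec}(\varphi_1) = \{1\}$ (as $\varphi_1 = \chi_1$). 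These are the base cases.

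Next I would write each target function as a tensor product of $k$ one-dimensional factors and apply Lemma \ref{Lemma:SumSpectr} iteratively. We have $I_k = I_1 \otimes \cdots \otimes I_1$ ($k$ factors), so $\mathrm{Spec}(I_k) = \{0,1\} + \cdots + \{0,1\} = \{0,1,\ldots,k\}$, proving (4). For $\psi_k$ and $\varphi_k$, note the identities $\psi_k(x) = \mathbf 1_{\{0\}}(x) + \mathbf 1_{\{1\}}(x)$ where $\mathbf 1_{\{1\}}$ is the indicator of the all-ones vector; writing $\mathbf 1_{\{0\}} = \bigotimes_{r=1}^k \mathbf 1_{\{0\}}^{(r)}$ and $\mathbf 1_{\{1\}} = \bigotimes_{r=1}^k \mathbf 1_{\{1\}}^{(r)}$, and using $\mathbf 1_{\{1\}}^{(1)} = \tfrac12(\chi_0 - \chi_1)$ on $H(1)$, one sees $\psi_k = \tfrac{1}{2^{k-1}} \sum_{u:\, \mathrm{wt}(u)\ \mathrm{even}} \chi_u$ and $\varphi_k = \tfrac{1}{2^{k-1}} \sum_{u:\, \mathrm{wt}(u)\ \mathrm{odd}} \chi_u$ by expanding the tensor products via Lemma \ref{Lemma:Characters}. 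Then $\mathrm{Spec}(\psi_k)$ is the set of even integers in $\{0,\ldots,k\}$ and $\mathrm{Spec}(\varphi_k)$ is the set of odd integers in $\{0,\ldots,k\}$, which after sorting out the parity of $k$ gives exactly statements (1), (2), (3). Alternatively, the cleaner route is: $\psi_k = \psi_1 \otimes \psi_1 \otimes \cdots$? — no, that is false, so the character-expansion is the honest way, or one uses the recursion $\psi_{k+1}$, $\varphi_{k+1}$ in terms of slices on the last coordinate together with Lemma \ref{Lemma:Reduction}.

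**The main obstacle.** The only genuinely non-routine point is the parity bookkeeping for $\varphi_k$ and $\psi_k$: the spectrum lands on even-weight (resp. odd-weight) characters, and one must check that \emph{every} weight of the correct parity in $\{0,1,\ldots,k\}$ actually appears with nonzero coefficient (it does, since all characters of a given weight appear with the same nonzero coefficient $\pm 2^{-(k-1)}$), and then translate "even weights up to $k$" into the stated explicit sets, handling the small cases $\varphi_1$ ($k$ odd, but $\{1,3,\ldots,k\}$ degenerates to $\{1\}$) and $\varphi_2$ (here the even-$k$ formula $\{1,3,\ldots,k-1\}$ degenerates to $\{1\}$, and one checks $\mathrm{wt}=3$ does not occur because there are no weight-$3$ vectors in $\mathbb Z_2^2$) separately. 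I would use Lemma \ref{Lemma:Characters} to justify that a nonzero coefficient on some character of weight $i$ forces $f_i \not\equiv 0$, and that distinct-weight characters lie in distinct eigenspaces, which is exactly what makes the spectrum readable off the character expansion. The rest is the elementary sumset computation $\{0,1\}+\cdots+\{0,1\} = \{0,\ldots,k\}$ and its even/odd analogues, already encapsulated in Lemma \ref{Lemma:SumSpectr}.
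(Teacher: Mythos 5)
Your proposal is correct and follows essentially the same route as the paper: both reduce to the expansion of each function in the character basis $\{\chi_u\}$, observe that the coefficient of $\chi_u$ is $\pm 2^{-(n-1)}$ exactly when $\mathrm{wt}(u)$ has the right parity (and is nonzero for every $u$ in the case of $I_n$), and read off the spectrum from Lemma \ref{Lemma:Characters}. The only cosmetic difference is that you obtain the coefficients by decomposing into point indicators and tensoring (and invoke Lemma \ref{Lemma:SumSpectr} for $I_k$), whereas the paper computes $\langle\varphi_n,\chi_u\rangle$ directly as a two-term sum; the resulting expansions and parity bookkeeping are identical.
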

\begin{proof}
Let us consider the function $\varphi_n$.
By Lemma \ref{Lemma:Characters}, there exist the real numbers $c_u$, where $u\in \mathbb{Z}_{2}^n$, such that
$$\varphi_n=\sum_{u\in \mathbb{Z}_{2}^n}c_u\chi_u.$$
Then we have $$\langle \varphi_n,\chi_u \rangle=\langle \sum_{u\in \mathbb{Z}_{2}^n}c_u\chi_u,\chi_u \rangle=c_u\langle \chi_u,\chi_u \rangle=c_u.$$
On the other hand, $$\langle \varphi_n,\chi_u \rangle=\frac{1}{2^n}\sum_{x\in \mathbb{Z}_{2}^n}\varphi_n(x)\chi_u(x)=\frac{1}{2^n}(1-(-1)^{u_1+\ldots+u_n}).$$
Hence $$c_u=\frac{1}{2^n}(1-(-1)^{u_1+\ldots+u_n})$$ for any $u\in \mathbb{Z}_{2}^n$. So, we have
$$
c_u=\begin{cases}
\frac{1}{2^{n-1}},&\text{if $\mathrm{wt}(u)$ is odd;}\\
0,&\text{if $\mathrm{wt}(u)$ is even.}
\end{cases}
$$
Using Lemma \ref{Lemma:Characters}, we obtain that $\mathrm{Spec}(\varphi_n)$ consists of odd numbers belonging to the set $\{1,\ldots,n\}$.

The proofs for the functions $\psi_n$ and $I_n$ are similar.
\end{proof}

\begin{lemma}\label{Lemma:OptimalConstruction}
Let $n$ be a positive integer and $n=n_1+\ldots+n_k+m_1+\ldots+m_{\ell}+r$, where $n_1,\ldots,n_k$ are odd positive integers, $m_1,\ldots,m_{\ell}$ are even positive integers, $k$, $\ell$ and $r$ are nonnegative integers.
Then the following statements hold:

\begin{enumerate}
    \item Let $f=\varphi_{n_1}\otimes\cdots \otimes \varphi_{n_k}\otimes \varphi_{m_1}\otimes\cdots \otimes \varphi_{m_{\ell}}\otimes I_r$.
Then $f\in U_{[k+\ell,n-\ell]}(n)$ and $|f|=2^{k+\ell}$. Moreover, $$\mathrm{Spec}(f)=\{k+\ell,k+\ell+1,\ldots,n-\ell\}$$ for $r>0$ and $$\mathrm{Spec}(f)=\{k+\ell,k+\ell+2,\ldots,n-\ell\}$$ for $r=0$.

 \item Let $f=\psi_{n_1}\otimes\cdots \otimes \psi_{n_k}\otimes \varphi_{m_1}\otimes\cdots \otimes \varphi_{m_{\ell}}\otimes I_r$.
Then $f\in U_{[\ell,n-k-\ell]}(n)$ and $|f|=2^{k+\ell}$. Moreover, $$\mathrm{Spec}(f)=\{\ell,\ell+1,\ldots,n-k-\ell\}$$ for $r>0$ and $$\mathrm{Spec}(f)=\{\ell,\ell+2,\ldots,n-k-\ell\}$$ for $r=0$.

\end{enumerate}
\end{lemma}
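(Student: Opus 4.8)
The plan is to reduce everything to a computation of $\mathrm{Spec}(f)$ in each case: once the spectrum is known, the claimed membership $f\in U_{[\,\cdot\,,\,\cdot\,]}(n)$ is read off from its smallest and largest elements, and the cardinality is handled separately. For the cardinality: since $(f_1\otimes f_2)(x,y)=f_1(x)f_2(y)$ is nonzero exactly when both factors are nonzero, $|f_1\otimes f_2|=|f_1|\cdot|f_2|$, and iterating over all tensor factors together with $|\varphi_k|=|\psi_k|=2$ and $|I_r|=1$ gives $|f|=2^{k}\cdot 2^{\ell}=2^{k+\ell}$ in both parts.

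For the spectrum, I would iterate Lemma~\ref{Lemma:SumSpectr} over the tensor factors (the multi-factor tensor product being associative under concatenation of coordinates), so that $\mathrm{Spec}(f)$ equals the sumset of the spectra of the individual factors, and then read those spectra off Lemma~\ref{Lemma:SpecFunctions}. It is convenient to observe that each factor's spectrum is a set of the form ``all integers of a fixed parity $p$ in an integer interval $[a,b]$'' with $a\equiv b\equiv p\pmod 2$: for odd $n_i$ one has $\mathrm{Spec}(\varphi_{n_i})$ of type $(p=1,[1,n_i])$, for even $m_i$ one has $\mathrm{Spec}(\varphi_{m_i})$ of type $(p=1,[1,m_i-1])$, for odd $n_i$ one has $\mathrm{Spec}(\psi_{n_i})$ of type $(p=0,[0,n_i-1])$, and $\mathrm{Spec}(I_r)=\{0,1,\ldots,r\}$, which for $r\ge 1$ is the full interval $[0,r]$ and for $r=0$ (missing factor) is $\{0\}$, of type $(p=0,[0,0])$. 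What remains is elementary: (a) the sumset of two sets of this form, of types $(p_1,[a_1,b_1])$ and $(p_2,[a_2,b_2])$, is the set of type $(p_1+p_2\bmod 2,\ [a_1+a_2,b_1+b_2])$ --- proved by the usual argument that, for any target value of the right parity in the target interval, the interval of admissible first summands is nonempty and has endpoints of the correct parity; (b) by induction the sumset of any number of such sets is again of this form; (c) for $r\ge 1$, the sumset of $\{0,1,\ldots,r\}$ with a set of type $(p,[a,b])$ is the full interval $[a,b+r]$, since $\{0,1,\ldots,r\}=\{0,1\}+\{0,1,\ldots,r-1\}$, adding $\{0,1\}$ already restores both parities in $[a,b+1]$, and a sum of intervals is an interval.

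Applying (b) in Part~1, the sumset of the $k+\ell$ spectra of the $\varphi$-factors has type $(k+\ell\bmod 2,\ [k+\ell,\,N])$ with $N=n_1+\cdots+n_k+(m_1-1)+\cdots+(m_\ell-1)=n-r-\ell$; for $r=0$ this already is $\mathrm{Spec}(f)$, and since $n\equiv k\pmod 2$ (the $n_i$ being odd and the $m_i$ even) one has $N=n-\ell\equiv k+\ell\pmod 2$, so $\mathrm{Spec}(f)=\{k+\ell,k+\ell+2,\ldots,n-\ell\}$, while for $r\ge 1$ applying (c) turns it into the full interval $[k+\ell,n-\ell]$. In Part~2, (b) gives that the sumset of the $k$ spectra of the $\psi$-factors and the $\ell$ spectra of the $\varphi_{m_i}$-factors has type $(\ell\bmod 2,\ [\ell,\,M])$ with $M=(n_1-1)+\cdots+(n_k-1)+(m_1-1)+\cdots+(m_\ell-1)=n-r-k-\ell$, and the same dichotomy on $r$ (using again $n\equiv k\pmod 2$) yields the stated spectra. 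Finally, writing $f=\sum_{i\in\mathrm{Spec}(f)}f_i$ with $f_i\in U_i(n)$ shows $f\in U_{[\min\mathrm{Spec}(f),\,\max\mathrm{Spec}(f)]}(n)$, which is $U_{[k+\ell,n-\ell]}(n)$ in Part~1 and $U_{[\ell,n-k-\ell]}(n)$ in Part~2. I expect the main nuisance to be checking the degenerate instances ($r=0$, $k=0$, $\ell=0$, $n_i=1$, $m_i=2$), where some factor-spectra collapse to a point (e.g.\ $\mathrm{Spec}(\varphi_1)=\mathrm{Spec}(\varphi_2)=\{1\}$ and $\mathrm{Spec}(\psi_1)=\{0\}$); these are precisely the boundary cases of (a)--(c), hence covered by the same arguments, but they deserve an explicit check.
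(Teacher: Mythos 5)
Your proposal is correct and follows essentially the same route as the paper: apply Lemma~\ref{Lemma:SumSpectr} to reduce $\mathrm{Spec}(f)$ to the sumset of the factor spectra, read those off Lemma~\ref{Lemma:SpecFunctions}, and use $|f_1\otimes f_2|=|f_1|\cdot|f_2|$ for the support size. The only difference is that you spell out the sumset arithmetic (your steps (a)--(c)) that the paper leaves implicit, and that detail checks out.
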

\begin{proof}
Let us consider the first case.
By Lemma \ref{Lemma:SumSpectr} we have
$$\mathrm{Spec}(f)=\mathrm{Spec}(\varphi_{n_1})+\ldots+\mathrm{Spec}(\varphi_{n_k})+\ldots+\mathrm{Spec}(\varphi_{m_1})+\ldots+\mathrm{Spec}(\varphi_{m_{\ell}})+\mathrm{Spec}(I_r).$$
Then applying Lemma \ref{Lemma:SpecFunctions}, we obtain that $$\mathrm{Spec}(f)=\{k+\ell,k+\ell+1,\ldots,n-\ell\}$$ for $r>0$ and $$\mathrm{Spec}(f)=\{k+\ell,k+\ell+2,\ldots,n-\ell\}$$ for $r=0$. Using the equality $|f_1\otimes f_2|=|f_1|\cdot |f_2|$, we see that $|f|=2^{k+\ell}$.

The proof for the second case is similar.
\end{proof}

\section{Main results}\label{Sec:Main}
In this section, we prove the main theorem of this paper. Firstly, we prove the following result.

\begin{lemma}\label{Lemma:3Values}
Let $f\in U_{[i,j]}(n)$, where $i+j\geq n$. If $|f|=2^i$, then $f$ takes values from the set $\{-a,0,a\}$, where $a$ is a positive real number.
\end{lemma}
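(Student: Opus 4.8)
The plan is to induct on $n$, using the reduction Lemma \ref{Lemma:NiceRepr} to peel off one coordinate at a time while keeping control of the support size. Fix a coordinate $r$, and write $f_0^r = g+h$, $f_1^r = g-h$ with $g\in U_{[i,j]}(n-1)$ and $h\in U_{[i-1,j-1]}(n-1)$ as in Lemma \ref{Lemma:NiceRepr}. The key point is that the hypothesis $i+j\ge n$ forces both the sub-statement for $g$ (where $i+j\ge n > (n-1)$) and for $h$ (where $(i-1)+(j-1) = i+j-2 \ge n-2 = (n-1)-1$) to be in the regime $i'+j'\ge n'$ of the lemma being proved — so the inductive hypothesis applies to each of $g$ and $h$ provided each is either zero or achieves its own minimum support. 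Since $|f| = |f_0^r| + |f_1^r| \ge 2\cdot 2^{\,?}$ by Theorem \ref{Th:Bound} applied on $H(n-1)$, and $|f|=2^i$, I expect a counting argument to show that the supports of $f_0^r$ and $f_1^r$ must each be as small as possible and that, for a suitable choice of $r$, the functions $g$ and $h$ inherit optimality.

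More concretely, here is the intended skeleton. First handle the base case: for small $n$ (say $i=0$, where $2^i=1$ and $f$ has a single nonzero value, trivially in $\{-a,0,a\}$; and $i=j$, $i+j\ge n$ forcing $j\ge n/2$, which can be checked directly or reduced further). Then, for the inductive step, I would argue that there exists a coordinate $r$ such that $f_0^r \not\equiv 0$ and $f_1^r\not\equiv 0$ (if no such coordinate exists, $f$ is supported on a single face and one recurses on a lower-dimensional cube). For that $r$, the bound $|f_0^r|+|f_1^r| = 2^i$ together with $|f_k^r|\ge 2^{i}$... — wait, the correct lower bounds are $|f_0^r|, |f_1^r|\ge 2^{\,i-1}$ since $f_k^r\in U_{[i-1,j]}(n-1)$ by Lemma \ref{Lemma:Reduction}(3) and $(i-1)+j\ge n-1$, so Theorem \ref{Th:Bound} gives $2^{i-1}$ each. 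Hence $|f_0^r| = |f_1^r| = 2^{i-1}$, and both restrictions are optimal in $U_{[i-1,j]}(n-1)$. The delicate part is relating this to optimality of $g$ and $h$: one has $|g+h| = |g-h| = 2^{i-1}$, and since $g\in U_{[i,j]}(n-1)$, $h\in U_{[i-1,j-1]}(n-1)$ live in spaces with different minimum supports ($2^i$ and $2^{i-1}$ respectively, using $i+j\ge n$), the supports of $g$ and $h$ must interact in a constrained way — I expect that on the symmetric difference of $S(g)$ and $S(h)$ the two functions agree up to sign, and on the intersection cancellation forces the values, ultimately pinning everything to a single absolute value $a$.

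The main obstacle I anticipate is precisely this last bookkeeping: controlling how $g$ and $h$, which are optimal (or zero) in \emph{different} interval-spaces with \emph{different} optimal support sizes, can combine so that $g+h$ and $g-h$ each have support exactly $2^{i-1}$. In particular one must rule out configurations where $g$ contributes a large-support optimal function that is killed in the sum $g+h$ only by massive cancellation against $h$ — but $h$ lies in a space whose functions have strictly smaller support, so such cancellation is impossible except in degenerate ways. Making that dichotomy rigorous (either $g\equiv 0$, forcing $f_0^r = h = -f_1^r$ and an easy recursion; or $h$ is supported inside $S(g)$ and $g$ is optimal, recursing on $g$) is where the real work lies. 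Once the structural dichotomy is established, an induction on $n$ delivers that every nonzero value of $g$, hence of $f$, has the same absolute value $a>0$, which is the claim. I would also remark that Lemma \ref{Lemma:SymmetricIntSpaces} lets one assume $i+j\ge n$ without loss when needed, though here the hypothesis already gives it.
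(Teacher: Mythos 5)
Your overall strategy --- induction on $n$ via the decomposition $f_0^r=g+h$, $f_1^r=g-h$ of Lemma \ref{Lemma:NiceRepr} --- is exactly the paper's, and your base case $i=0$ and your instinct to split into ``$g\equiv 0$'' versus ``$g$ optimal'' are both on target. But there are two genuine problems. First, an off-by-one error: you assert that $h\in U_{[i-1,j-1]}(n-1)$ is automatically in the regime of the lemma because $(i-1)+(j-1)=i+j-2\ge n-2$. The regime required on $H(n-1)$ is $(i-1)+(j-1)\ge n-1$, and $n-2\ge n-1$ is false; when $i+j=n$ the inductive hypothesis simply does not apply to $h$. The paper closes this hole only inside the case $g\equiv 0$: there $|h|=\tfrac12|f|=2^{i-1}$, and if $i+j=n$ then Theorem \ref{Th:Bound} (applied to $h$ in $U_{[i-1,j-1]}(n-1)$) forces $|h|\ge 2^{(n-1)-(j-1)}=2^{n-j}=2^{i}$, a contradiction; hence $i+j\ge n+1$ in that case and the induction goes through. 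Without some such argument your recursion on $h$ is unjustified.

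Second, the step you explicitly defer as ``where the real work lies'' is the heart of the proof, and you have not supplied it. It is resolved by a one-line support count rather than by the comparison of minimum supports in the two spaces that you sketch: if $g\not\equiv 0$, then
$$2^i=|f|=|f_0^r|+|f_1^r|\;\ge\;|f_0^r+f_1^r|=|g|\;\ge\;2^i,$$
where the last inequality is Theorem \ref{Th:Bound} for $g\in U_{[i,j]}(n-1)$ (here $i+j\ge n>n-1$, so the induction hypothesis and the bound both apply to $g$ with no regime issue). Equality throughout gives $|g|=|f|=2^i$, which forces $S(f_0^r)\cap S(f_1^r)=\emptyset$ and $S(f_0^r)\cup S(f_1^r)=S(g)$; consequently $h(x)\in\{-g(x),g(x)\}$ for every $x$, so $f$ takes values in $\{-2a',0,2a'\}$ once the induction gives $g$ values in $\{-a',0,a'\}$. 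This disposes of the ``massive cancellation'' scenario you worry about without any analysis of how $S(g)$ and $S(h)$ interact. Your alternative route via $|f_k^r|\ge 2^{i-1}$ from Lemma \ref{Lemma:Reduction}(3) is not wrong, but it does not by itself yield the needed pointwise relation between $h$ and $g$, and the proposal stops short of a complete argument.
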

\begin{proof}
Let us prove this lemma by induction on $n$, $i$ and $j$. If $i=0$, then $|f|=1$ and the claim of the lemma holds. So, we can assume that $i\geq 1$. If $n=1$, then $i=j=1$. Then $f\in U_1(1)$ and the claim of the lemma
holds.

Let us prove the induction step for $n\geq 2$ and $i\geq 1$. Let us consider the functions $f_{0}^{n}$ and $f_{1}^{n}$.
Denote $f_k=f_{k}^{n}$ for $k\in \{0,1\}$.
Lemma \ref{Lemma:NiceRepr} implies that there are functions $g$ and $h$ such that $f_0=g+h$, $f_1=g-h$ and $g\in U_{[i,j]}(n-1)$,
$h\in U_{[i-1,j-1]}(n-1)$.
Let us consider two cases.

In the first case we suppose that $g\equiv 0$. In this case we have $|h|=\frac{1}{2}|f|=2^{i-1}$.
Let us show that $i+j\geq n+1$. Indeed, if $i+j=n$, then $|h|\geq 2^i$ due to Theorem \ref{Th:Bound}.
Since $|h|=2^{i-1}$, we get a contradiction. So, in this case we have $i+j\geq n+1$.
Applying the induction assumption for $h$, we obtain that $h$ takes values from the set $\{-a,0,a\}$, where $a$ is a positive real number.
Therefore, $f$ also takes values from the set $\{-a,0,a\}$.

In the second case we suppose that $g\not\equiv 0$. Since $g\in U_{[i,j]}(n-1)$, by Theorem \ref{Th:Bound} we obtain that $|g|\geq 2^i$.
Then we have $$|f|=|f_0|+|f_1|\geq |f_0+f_1|=|g|\geq 2^i.$$ Therefore $|g|=|f|=2^i$.
Applying the induction assumption for $g$, we obtain that $g$ takes values from the set $\{-a',0,a'\}$, where $a'$ is a positive real number.
Since $|f|=|g|$, we have $h(x)\in \{-g(x),g(x)\}$ for every vertex $x$ of $H(n-1)$.
Thus, $f$ takes values from the set $\{-2a',0,2a'\}$.
\end{proof}

The main result of this paper is the following.

\begin{theorem}\label{Th:Main}
The following statements hold:

\begin{enumerate}
  \item Let $f\in U_{[i,j]}(n)$, where $i+j\geq n$. The equality $|f|=2^i$ holds if and only if $f$ is equivalent to
  $$\varphi_{n_1}\otimes\cdots \otimes \varphi_{n_k}\otimes \varphi_{m_1}\otimes\cdots \otimes \varphi_{m_{\ell}}\otimes I_r,$$ where
 $n=n_1+\ldots+n_k+m_1+\ldots+m_{\ell}+r$, $n_1,\ldots,n_k$ are odd positive integers, $m_1,\ldots,m_{\ell}$ are even positive integers, $k$, $\ell$ and $r$ are nonnegative integers, $k+\ell=i$ and $\ell\geq n-j$.

  \item Let $f\in U_{[i,j]}(n)$, where $i+j\leq n$. The equality $|f|=2^{n-j}$ holds if and only if $f$ is equivalent to
  $$\psi_{n_1}\otimes\cdots \otimes \psi_{n_k}\otimes \varphi_{m_1}\otimes\cdots \otimes \varphi_{m_{\ell}}\otimes I_r,$$ where
 $n=n_1+\ldots+n_k+m_1+\ldots+m_{\ell}+r$, $n_1,\ldots,n_k$ are odd positive integers, $m_1,\ldots,m_{\ell}$ are even positive integers, $k$, $\ell$ and $r$ are nonnegative integers, $k+\ell=n-j$ and $\ell\geq i$.
\end{enumerate}

\end{theorem}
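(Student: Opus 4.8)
The ``if'' part of both statements follows at once from Lemma~\ref{Lemma:OptimalConstruction}: the displayed tensor products lie in the required spaces and have support of size $2^i$ (resp.\ $2^{n-j}$), and since automorphisms of $H(n)$ preserve each eigenspace $U_k(n)$ and scaling changes nothing, an equivalent function has the same properties. So the content is the ``only if'' part. I would prove statement~1 first and then reduce statement~2 to it using the bipartiteness of $H(n)$.

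\noindent\textbf{Statement 1, ``only if''.} Let $f\in U_{[i,j]}(n)$, $i+j\ge n$, $|f|=2^i$; the case $i=0$ is trivial, so assume $i\ge1$. First I would use Lemma~\ref{Lemma:3Values} to rescale $f$ to be $\{-1,0,1\}$-valued, and set $T_1=\{x:f(x)=1\}$, $T_0=\{x:f(x)=-1\}$; both are nonempty because $i\ge1$ forces $\sum_x f(x)=0$, and $|T_0|+|T_1|=|S(f)|=2^i$. The key point is that $\{T_0,T_1\}$ is an $[i-1]$-trade: for any $(n-i+1)$-face $\Gamma$, the function $\mathbf 1_\Gamma$ is a linear combination of characters of weight $\le i-1$ while $f$ is a combination of characters of weight in $[i,j]$, so $\langle f,\mathbf 1_\Gamma\rangle=0$, i.e.\ $|\Gamma\cap T_0|=|\Gamma\cap T_1|$ (this can also be read off from Lemma~\ref{Lemma:CorrImm}). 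Now Lemma~\ref{Lemma:CharFunction} gives $\mathbf 1_{S(f)}\in\mathcal{RM}(n-i,n)$; since $|S(f)|=2^{\,n-(n-i)}$, Lemma~\ref{Lemma:CodewordsRM} shows $S(f)$ is an $i$-dimensional affine subspace, and as it is split by the $[i-1]$-trade $\{T_0,T_1\}$, Lemma~\ref{Lemma:SpaceSplit} applies. Hence, after an automorphism of $H(n)$ (a translation followed by a coordinate permutation) and replacing $f$ by an equivalent function, $S(f)$ is the linear span of $\mathbf 1_{B_1},\dots,\mathbf 1_{B_i}$, where $B_1,\dots,B_i$ are disjoint nonempty blocks of consecutive coordinates of sizes $b_1,\dots,b_i$, with $r=n-(b_1+\dots+b_i)$ further coordinates. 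I expect this step — recognising $S(f)$ \emph{together with the signs of $f$} as an $[i-1]$-trade of minimum size $2^i$, so that Krotov's structure theorems identify $S(f)$ almost completely — to be the main obstacle.

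\noindent\textbf{Fixing the values and finishing statement 1.} Next I would identify $S(f)$ with $\mathbb Z_2^i$ via $(c_1,\dots,c_i)\mapsto\sum_s c_s\mathbf 1_{B_s}$ and let $F\in U(i)$ be the corresponding restriction of $f$. Since $\chi_u(\sum_s c_s\mathbf 1_{B_s})=(-1)^{\sum_s\varepsilon_s(u)c_s}$ with $\varepsilon_s(u)=|\{b\in B_s:u_b=1\}|\bmod2$, and every vector of $\mathbb Z_2^i$ of weight $\le i-1$ equals $(\varepsilon_1(u),\dots,\varepsilon_i(u))$ for some $u$ of weight $\le i-1$ (pick one coordinate in each $B_s$ with $w_s=1$), the hypothesis $f\perp U_0(n)\oplus\cdots\oplus U_{i-1}(n)$ forces $F$ to be orthogonal to all characters of $\mathbb Z_2^i$ of weight $\le i-1$; being $\{\pm1\}$-valued, $F=\pm\chi_{(1,\dots,1)}$. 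Thus $f=\pm\,\varphi_{b_1}\otimes\cdots\otimes\varphi_{b_i}\otimes I_r$, so $f$ is equivalent to $\varphi_{n_1}\otimes\cdots\otimes\varphi_{n_k}\otimes\varphi_{m_1}\otimes\cdots\otimes\varphi_{m_\ell}\otimes I_r$, where the $n_t$ (resp.\ $m_t$) are the odd (resp.\ even) $b_s$, so $k+\ell=i$. Finally, by Lemma~\ref{Lemma:OptimalConstruction}(1) the largest element of $\mathrm{Spec}(f)$ is $n-\ell$; equivalence preserves the spectrum and $\mathrm{Spec}(f)\subseteq[i,j]$, so $n-\ell\le j$, i.e.\ $\ell\ge n-j$.

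\noindent\textbf{Statement 2 via bipartiteness.} For statement~2, given $f\in U_{[i,j]}(n)$ with $i+j\le n$ and $|f|=2^{n-j}$, I would pass to $\widetilde f$, which by Lemma~\ref{Lemma:SymmetricIntSpaces} lies in $U_{[n-j,\,n-i]}(n)$, with $(n-j)+(n-i)\ge n$ and $|\widetilde f|=2^{n-j}$; statement~1 applied to $\widetilde f$ (with $i,j$ replaced by $n-j,n-i$) gives $\widetilde f\sim\varphi_{n_1}\otimes\cdots\otimes\varphi_{n_k}\otimes\varphi_{m_1}\otimes\cdots\otimes\varphi_{m_\ell}\otimes I_r$ with $k+\ell=n-j$ and $\ell\ge n-(n-i)=i$. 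Since $f=\widetilde{\widetilde f\,}$, the map $\widetilde{\,\cdot\,}$ commutes with tensor products and with $\sim$, and $\widetilde{\varphi_m}=\psi_m$ for odd $m$, $\widetilde{\varphi_m}=\varphi_m$ for even $m$, $\widetilde{I_r}=I_r$, applying $\widetilde{\,\cdot\,}$ to the above yields $f\sim\psi_{n_1}\otimes\cdots\otimes\psi_{n_k}\otimes\varphi_{m_1}\otimes\cdots\otimes\varphi_{m_\ell}\otimes I_r$ with $k+\ell=n-j$ and $\ell\ge i$, which is exactly statement~2.
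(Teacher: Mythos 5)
Your proposal is correct and follows essentially the same route as the paper: the if-direction via Lemma~\ref{Lemma:OptimalConstruction}, the only-if direction for $i+j\ge n$ via Lemma~\ref{Lemma:3Values} and the $[i-1]$-trade $\{T_0,T_1\}$ combined with Lemmas~\ref{Lemma:CorrImm}, \ref{Lemma:CharFunction}, \ref{Lemma:CodewordsRM} and \ref{Lemma:SpaceSplit}, and the case $i+j\le n$ by passing to $\widetilde f$. Your ``fixing the values'' paragraph spells out, more explicitly than the paper does, why the signs of $f$ on the affine subspace are forced (the paper jumps directly from Lemma~\ref{Lemma:SpaceSplit} to $f\sim\varphi_{t_1}\otimes\cdots\otimes\varphi_{t_i}\otimes I_r$), but this is an elaboration of the same argument rather than a different one.
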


\begin{proof}

1. Suppose that $|f|=2^i$.
If $i=0$, then $j=n$. In this case $|f|=1$. Therefore, $f\sim I_n$ and the claim of the theorem holds.
In what follows in the proof of Theorem \ref{Th:Main} for $i+j\geq n$ we can assume that $i\geq 1$.

Let us consider a pair $\{T_0,T_1\}$, where $T_0=\{x\in \mathbb{Z}_2^n~|~f(x)>0\}$ and  $T_1=\{x\in \mathbb{Z}_2^n~|~f(x)<0\}$.
Lemmas \ref{Lemma:CorrImm} and \ref{Lemma:3Values} imply that every $(n-i+1)$-face contains the same number of elements from $T_0$ and from $T_1$.
So, $\{T_0,T_1\}$ is an $[i-1]$-trade in $H(n)$. Lemma \ref{Lemma:CharFunction} implies that $$\mathbf{1}_{T_0\cup T_1}\in \mathcal{RM}(n-i,n).$$
Since $|\mathbf{1}_{T_0\cup T_1}|=|f|$, we have $|\mathbf{1}_{T_0\cup T_1}|=2^i$.
Then by Lemma \ref{Lemma:CodewordsRM} we have that $\mathbf{1}_{T_0\cup T_1}$ is the characteristic function of an $i$-dimensional affine subspace
of $\mathbb{Z}_2^n$. Applying Lemma \ref{Lemma:SpaceSplit}, we obtain that $$f\sim \varphi_{t_1}\otimes\cdots \otimes \varphi_{t_i}\otimes I_r,$$
where $n=t_1+\ldots+t_i+r$, $t_1,\ldots,t_i$ are positive integers and $r$ is a nonnegative integer.
Suppose that the set $\{t_1,\ldots,t_i\}$ consists of $k$ odd numbers $n_1,\ldots,n_k$ and $\ell$ even numbers $m_1,\ldots,m_{\ell}$.
Then $$f\sim \varphi_{n_1}\otimes\cdots \otimes \varphi_{n_k}\otimes \varphi_{m_1}\otimes\cdots \otimes \varphi_{m_{\ell}}\otimes I_r.$$
Using Lemma \ref{Lemma:OptimalConstruction}, we see that $f\in U_{[i,n-\ell]}(n)$ and $n-\ell\in \mathrm{Spec}(f)$.
Since $f\in U_{[i,j]}(n)$, we obtain $\ell\geq n-j$.

Conversely, suppose that $$f\sim \varphi_{n_1}\otimes\cdots \otimes \varphi_{n_k}\otimes \varphi_{m_1}\otimes\cdots \otimes \varphi_{m_{\ell}}\otimes I_r,$$ where
$n=n_1+\ldots+n_k+m_1+\ldots+m_{\ell}+r$, $n_1,\ldots,n_k$ are odd positive integers, $m_1,\ldots,m_{\ell}$ are even positive integers, $k$, $\ell$ and $r$ are nonnegative integers, $k+\ell=i$ and $\ell\geq n-j$. Lemma \ref{Lemma:OptimalConstruction} implies that $f\in U_{[k+\ell,n-\ell]}(n)$ and $|f|=2^{k+\ell}$.
Since $k+\ell=i$ and $\ell\geq n-j$, we have $f\in U_{[i,j]}(n)$ and $|f|=2^i$.

2. Suppose that $|f|=2^{n-j}$.
Lemma \ref{Lemma:SymmetricIntSpaces} implies that $\widetilde{f}\in U_{[n-j,n-i]}(n)$. Note that $|\widetilde{f}|=|f|=2^{n-j}$.
By the first case of this theorem we obtain that $\widetilde{f}\sim v$, where $$v=\varphi_{n_1}\otimes\cdots \otimes \varphi_{n_k}\otimes \varphi_{m_1}\otimes\cdots \otimes \varphi_{m_{\ell}}\otimes I_r,$$
$n=n_1+\ldots+n_k+m_1+\ldots+m_{\ell}+r$, $n_1,\ldots,n_k$ are odd positive integers, $m_1,\ldots,m_{\ell}$ are even positive integers, $k$, $\ell$ and $r$ are nonnegative integers, $k+\ell=n-j$ and $\ell\geq i$.
Using the equality $\widetilde{f_1\otimes f_2}=\widetilde{f_1}\otimes \widetilde{f_2}$, we obtain that $$\widetilde{v}=\psi_{n_1}\otimes\cdots \otimes \psi_{n_k}\otimes \varphi_{m_1}\otimes\cdots \otimes \varphi_{m_{\ell}}\otimes I_r.$$
Therefore, we have $$f\sim \psi_{n_1}\otimes\cdots \otimes \psi_{n_k}\otimes \varphi_{m_1}\otimes\cdots \otimes \varphi_{m_{\ell}}\otimes I_r.$$

Conversely, suppose that $$f\sim \psi_{n_1}\otimes\cdots \otimes \psi_{n_k}\otimes \varphi_{m_1}\otimes\cdots \otimes \varphi_{m_{\ell}}\otimes I_r,$$ where
$n=n_1+\ldots+n_k+m_1+\ldots+m_{\ell}+r$, $n_1,\ldots,n_k$ are odd positive integers, $m_1,\ldots,m_{\ell}$ are even positive integers, $k$, $\ell$ and $r$ are nonnegative integers, $k+\ell=n-j$ and $\ell\geq i$. Lemma \ref{Lemma:OptimalConstruction} implies that $f\in U_{[\ell,n-k-\ell]}(n)$ and $|f|=2^{k+\ell}$.
Since $k+\ell=n-j$ and $\ell\geq i$, we have $f\in U_{[i,j]}(n)$ and $|f|=2^{n-j}$.
\end{proof}

Applying Theorem \ref{Th:Main} for $i=j$, we obtain the following result.
\begin{corollary}\label{Cor:}
The following statements hold:

\begin{enumerate}
  \item Let $f\in U_i(n)$, where $i\geq \frac{n}{2}$. The equality $|f|=2^i$ holds if and only if $f$ is equivalent to $\varphi_1^{2i-n}\otimes \varphi_2^{n-i}$.

  \item Let $f\in U_i(n)$, where $i\leq \frac{n}{2}$. The equality $|f|=2^{n-i}$ holds if and only if $f$ is equivalent to $\psi_1^{n-2i}\otimes \varphi_2^{i}$.
\end{enumerate}

\end{corollary}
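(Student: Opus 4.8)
The plan is to derive the corollary directly from Theorem~\ref{Th:Main} by specializing $j=i$, and then to isolate, among the product functions appearing in that characterization, exactly those whose spectrum is the single point $\{i\}$ --- equivalently, those that are genuine $\lambda_i(n)$-eigenfunctions rather than merely elements of some larger interval space $U_{[i,j]}(n)$. The bridge between ``membership in $U_i(n)$'' and ``the product has all blocks of minimal dimension'' is the explicit spectrum computed in Lemma~\ref{Lemma:OptimalConstruction}.

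For the first statement, the hypothesis $i\ge\frac n2$ gives $i+j=2i\ge n$, so Theorem~\ref{Th:Main}(1) applies: $|f|=2^i$ holds if and only if $f\sim\varphi_{n_1}\otimes\cdots\otimes\varphi_{n_k}\otimes\varphi_{m_1}\otimes\cdots\otimes\varphi_{m_\ell}\otimes I_r$ with $n=n_1+\ldots+n_k+m_1+\ldots+m_\ell+r$, the $n_p$ odd, the $m_q$ even, $k+\ell=i$, and $\ell\ge n-i$. By Lemma~\ref{Lemma:OptimalConstruction}(1) such a function has $\mathrm{Spec}(f)=\{k+\ell,k+\ell+1,\ldots,n-\ell\}$ when $r>0$ and $\{k+\ell,k+\ell+2,\ldots,n-\ell\}$ when $r=0$. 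Since $f\in U_i(n)$ and $f\not\equiv 0$ force $\mathrm{Spec}(f)=\{i\}$, the spectrum must be a singleton, so $k+\ell=n-\ell$; combined with $k+\ell=i$ this yields $\ell=n-i$ and $k=2i-n$. Now the identity $n=k+2\ell=n_1+\ldots+n_k+m_1+\ldots+m_\ell+r$ together with $n_p\ge 1$ and $m_q\ge 2$ forces $r=0$, $n_1=\cdots=n_k=1$ and $m_1=\cdots=m_\ell=2$, i.e. $f\sim\varphi_1^{2i-n}\otimes\varphi_2^{n-i}$ (note $\ell=n-i$ meets the constraint $\ell\ge n-i$ with equality). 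Conversely, Lemma~\ref{Lemma:OptimalConstruction}(1) shows this particular function lies in $U_{[i,i]}(n)=U_i(n)$ and has support $2^i$.

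For the second statement, $i\le\frac n2$ gives $i+j=2i\le n$, so Theorem~\ref{Th:Main}(2) applies and the argument is symmetric: $|f|=2^{n-i}$ holds if and only if $f\sim\psi_{n_1}\otimes\cdots\otimes\psi_{n_k}\otimes\varphi_{m_1}\otimes\cdots\otimes\varphi_{m_\ell}\otimes I_r$ with $k+\ell=n-i$ and $\ell\ge i$. Lemma~\ref{Lemma:OptimalConstruction}(2) gives $\mathrm{Spec}(f)=\{\ell,\ldots,n-k-\ell\}$, so requiring $\mathrm{Spec}(f)=\{i\}$ forces $\ell=n-k-\ell=i$, hence $k=n-2i$; again $n=k+2\ell=n_1+\ldots+n_k+m_1+\ldots+m_\ell+r$ with $n_p\ge 1$, $m_q\ge 2$ forces $r=0$ and all blocks minimal, giving $f\sim\psi_1^{n-2i}\otimes\varphi_2^{i}$. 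The converse is immediate from Lemma~\ref{Lemma:OptimalConstruction}(2).

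I do not expect a genuine obstacle: the statement is a routine specialization of Theorem~\ref{Th:Main}. The one point demanding a little care is the forward direction's deduction that the spectrum collapsing to a single value forces every $\varphi$- and $\psi$-block to have the smallest admissible dimension and $r=0$; this is precisely where the size bounds $n_p\ge 1$, $m_q\ge 2$ (positivity together with oddness/evenness) enter, combined with the identity $n=k+2\ell$ obtained by equating the two endpoints of the spectral interval. One should also record that when $n$ is even and $i=\frac n2$ both parts of Theorem~\ref{Th:Main} are available, and both produce $f\sim\varphi_2^{n/2}$, so the two statements are mutually consistent at the overlap.
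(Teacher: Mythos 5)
Your proposal is correct and follows exactly the route the paper intends: the paper derives Corollary~\ref{Cor:} simply by "applying Theorem~\ref{Th:Main} for $i=j$", and your argument supplies the omitted details (using Lemma~\ref{Lemma:OptimalConstruction} to see that a singleton spectrum forces $k+\ell=n-\ell$, hence $r=0$ and all blocks of minimal size). The counting step $n\ge k+2\ell+r$ with equality is exactly the right justification, so nothing is missing.
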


Finally, we illustrate Theorem \ref{Th:Main} in the following examples:

\begin{example}
Let $n=4$, $i=2$ and $j=3$. There are exactly two partitions of $4$ such that $k+\ell=2$ and $\ell\geq 1$: $4=2+2$ and $4=1+2+1$.
These partitions correspond to the functions $\varphi_2\otimes \varphi_2$ and $\varphi_1\otimes \varphi_2\otimes I_1$ respectively.
\end{example}

\begin{example}
Let $n=3$, $i=0$ and $j=2$. There are exactly three partitions of $3$ such that $k+\ell=1$ and $\ell\geq 0$: $3=2+1$, $3=3$ and $3=1+2$.
These partitions correspond to the functions $\varphi_2\otimes I_1$, $\psi_3$ and $\psi_1\otimes I_2$ respectively.
\end{example}

\section{Spectrum of optimal functions}\label{Sec:Spectr}
In this section, we discuss the spectrum of functions that are optimal in the space $U_{[i,j]}(n)$. Theorem \ref{Th:Main} and Lemma \ref{Lemma:OptimalConstruction} imply that the spectrum of such functions forms an arithmetic progression with common difference $1$ or $2$. More precisely, we have the following result.
\begin{corollary}\label{Cor:2}
The following statements hold:
\begin{enumerate}

  \item Let $f\in U_{[i,j]}(n)$, where $i+j\geq n$. If $|f|=2^i$, then $$\mathrm{Spec}(f)=\{i,i+d,\ldots,i+kd\},$$
  where $d\in \{1,2\}$, $k$ is non-negative integer and $i+kd\leq j$.

  \item Let $f\in U_{[i,j]}(n)$, where $i+j\leq n$. If $|f|=2^{n-j}$, then $$\mathrm{Spec}(f)=\{j-kd,j-(k-1)d,\ldots,j\},$$
  where $d\in \{1,2\}$, $k$ is non-negative integer and $j-kd\geq i$.
\end{enumerate}

\end{corollary}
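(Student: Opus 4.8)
The plan is to deduce Corollary \ref{Cor:2} directly from Theorem \ref{Th:Main} together with Lemma \ref{Lemma:OptimalConstruction}, using only the elementary fact that equivalent functions have the same spectrum.

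First I would record the observation that if $f\sim g$, then $\mathrm{Spec}(f)=\mathrm{Spec}(g)$. Indeed, if $g(x)=c\cdot f(\pi(x))$ for an automorphism $\pi$ of $H(n)$ and a nonzero constant $c$, then the linear map $U(n)\to U(n)$ sending a function to its composition with $\pi$ preserves each eigenspace $U_s(n)$ (an automorphism commutes with the adjacency operator), and multiplication by $c$ does not change which homogeneous components vanish; hence the decomposition $f=\sum_s f_s$ is carried componentwise to that of $g$, and $f_s\not\equiv 0$ iff $g_s\not\equiv 0$. Since Theorem \ref{Th:Main} describes optimal functions only up to equivalence, this reduces the computation of $\mathrm{Spec}(f)$ to the explicit tensor-product representatives.

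Next, for part 1, given $f\in U_{[i,j]}(n)$ with $i+j\geq n$ and $|f|=2^i$, Theorem \ref{Th:Main}(1) says $f$ is equivalent to $\varphi_{n_1}\otimes\cdots \otimes \varphi_{n_k}\otimes \varphi_{m_1}\otimes\cdots \otimes \varphi_{m_{\ell}}\otimes I_r$ with $k+\ell=i$ and $\ell\geq n-j$. By the observation above and Lemma \ref{Lemma:OptimalConstruction}(1), $\mathrm{Spec}(f)=\{i,i+1,\ldots,n-\ell\}$ if $r>0$ and $\mathrm{Spec}(f)=\{i,i+2,\ldots,n-\ell\}$ if $r=0$. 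In the first case this is an arithmetic progression starting at $i$ with common difference $d=1$; in the second with $d=2$, where the endpoint $n-\ell$ is genuinely reached because $n-\ell-i=n-k-2\ell$ is even when $r=0$ (as $n$ is then a sum of $k$ odd and $\ell$ even integers, so $n\equiv k\pmod 2$). In either case the largest term is $n-\ell$, and the constraint $\ell\geq n-j$ translates exactly to $n-\ell\leq j$, which is the asserted bound $i+kd\leq j$ (with $k$ now denoting the number of steps of the progression).

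For part 2 the argument is entirely analogous: Theorem \ref{Th:Main}(2) realizes $f$ as equivalent to $\psi_{n_1}\otimes\cdots \otimes \psi_{n_k}\otimes \varphi_{m_1}\otimes\cdots \otimes \varphi_{m_{\ell}}\otimes I_r$ with $k+\ell=n-j$ and $\ell\geq i$, and Lemma \ref{Lemma:OptimalConstruction}(2) gives $\mathrm{Spec}(f)=\{\ell,\ell+1,\ldots,n-k-\ell\}$ or $\{\ell,\ell+2,\ldots,n-k-\ell\}$; since $k+\ell=n-j$ the top of the progression equals $n-k-\ell=j$, the bottom is $\ell$, and $\ell\geq i$ gives the stated lower bound. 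I do not expect a genuine obstacle here — the statement is a bookkeeping corollary — the only points needing care are the parity check for $n-\ell$ (resp. $j-\ell$) in the $r=0$ case and the harmless clash of the symbol $k$ between the partition data and the number of terms of the progression, which I would resolve by renaming one of them in the write-up.
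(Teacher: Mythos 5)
Your proof is correct and takes the same route as the paper, which states Corollary~\ref{Cor:2} without a written proof, noting only that it follows from Theorem~\ref{Th:Main} and Lemma~\ref{Lemma:OptimalConstruction}. You supply exactly that deduction, and the two details you add --- that equivalence preserves the spectrum (automorphisms commute with the adjacency operator, hence preserve each $U_s(n)$) and the parity check showing $n-\ell-i$ (resp.\ $j-\ell$) is even when $r=0$ --- are precisely the bookkeeping the paper leaves implicit.
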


Corollary \ref{Cor:2} implies that if $f\in U_{[i,j]}(n)$ and $\mathrm{Spec}(f)$ is not an arithmetic progression of a special kind,
then $|f|>\max(2^i,2^{n-j})$. For example, if $f\in U(n)$ and $\mathrm{Spec}(f)=\{0,3\}$, where $n\geq 3$, then $|f|>2^{n-3}$.
In view of these observations, it seems natural to consider the following question.

\begin{problem}
Let $n\geq 3$. Find $$\min_{f\in U(n), \mathrm{Spec}(f)=\{0,3\}}|f|.$$
\end{problem}

\end{document}